\documentclass[a4paper,10pt,twoside,reqno]{amsart}

\usepackage{enumerate}
\usepackage{amsmath}
\usepackage{amssymb}
\usepackage{amsfonts}
\usepackage{amsthm}

\newtheorem{thm}{Theorem}[section]
\newtheorem{lem}[thm]{Lemma}
\newtheorem{prop}[thm]{Proposition}
\newtheorem{cor}[thm]{Corollary}

\theoremstyle{definition}
\newtheorem{dfn}[thm]{Definiton}
\newtheorem{example}[thm]{Example}

\theoremstyle{remark}
\newtheorem*{rem}{Remark}

\newcommand{\set}[1]{\{#1\}}
\newcommand{\al}{\alpha}
\newcommand{\be}{\beta}
\newcommand{\ga}{\gamma}
\newcommand{\de}{\delta}
\newcommand{\e}{\epsilon}
\newcommand{\N}{\mathbb{N}}

\newcommand{\diam}{\operatorname{diam}}

\newcommand{\m}{\mathbf{m}}

\newcommand{\cO}{\mathcal{O}}

\numberwithin{equation}{section}

\begin{document}

\title[Fixed point theorems for Meir-Keeler type contractions]%
{Fixed point theorems for Meir-Keeler type contractions in metric spaces}

\author[M. Abtahi]{Mortaza Abtahi}

\address{School of Mathematics and Computer Sciences,
Damghan University, Damghan, P.O.BOX 36715-364, Iran}

\email{abtahi@du.ac.ir; mortaza.abtahi@gmail.com}

\subjclass[2010]{Primary 54H25; Secondary 54E50, 47H10}

\keywords{Meir-Keeler Contractions; Asymptotic contractions; Fixed point theorems;
Complete metric spaces.}

\begin{abstract}
  We establish a simple and powerful lemma that provides a criterion for
  sequences in metric spaces to be Cauchy. Using the lemma, it is then easily
  verified that the Picard iterates $\{T^nx\}$, where
  $T$ is a contraction or asymptotic contraction of Meir-Keeler type,
  are Cauchy sequences.
  As an application, new and simple proofs for
  several known results on the existence of a fixed point for continuous
  and asymptotically regular self-maps of complete metric spaces satisfying
  a contractive condition of Meir-Keeler type are derived. These results include
  the remarkable fixed point theorem of Proinov in
  [Petko D. Proinov, Fixed point theorems in metric spaces,
  Nonlinear Anal. \textbf{46} (2006) 546--557],
  the fixed point theorem of Suzuki for asymptotic contractions
  in [Tomonari Suzuki, A definitive result on asymptotic contractions,
  J. Math. Anal. Appl. \textbf{335} (2007) 707--715], and others.
  We also prove some new fixed point theorems.
\end{abstract}

\maketitle

\section{Introduction}
\label{sec:intro}

Metric fixed point theory is a very extensive area of analysis with various applications.
Many of the most important nonlinear problems of applied mathematics reduce to
finding solutions of nonlinear functional equations which can be formulated
in terms of finding the fixed points of a given nonlinear operator of an infinite
dimensional function space $X$ into itself. There is a classical general existence
theory of fixed points for mappings satisfying a variety of contractive conditions.
The first basic result is the Banach contraction principle.

\begin{thm}[Banach \cite{Banach}]
\label{thm:Banach}
  Let $X$ be a complete metric space and $T:X\to X$ be a strict contraction,
  that is, there exists $\al$, $0<\al<1$, such that
  \begin{equation}\label{eqn:Banach contraction}
    \forall x,y\in X,\quad d(Tx,Ty) \leq \al d(x,y).
  \end{equation}
  Then $T$ has a unique fixed point $z$, and $T^nx\to z$, for every $x\in X$.
\end{thm}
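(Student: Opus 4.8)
The plan is to fix an arbitrary starting point $x\in X$ and study its \emph{Picard iterates} $x_n=T^nx$, showing that this sequence converges to the desired fixed point. The entire weight of the argument will rest on proving that $\{x_n\}$ is a Cauchy sequence; once this is established, completeness of $X$ furnishes a limit, and the contractive hypothesis---which forces $T$ to be continuous---identifies that limit as a fixed point.

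For the Cauchy property, I would first iterate the contraction inequality \eqref{eqn:Banach contraction} along consecutive terms to obtain the geometric decay
\begin{equation*}
  d(x_{n+1},x_n)=d(T^{n+1}x,T^nx)\leq \al\,d(T^nx,T^{n-1}x)\leq\cdots\leq \al^n d(Tx,x).
\end{equation*}
Then, for any $m>n$, the triangle inequality together with this estimate and the convergence of the geometric series gives
\begin{equation*}
  d(x_m,x_n)\leq\sum_{k=n}^{m-1}d(x_{k+1},x_k)\leq\Bigl(\sum_{k=n}^{m-1}\al^k\Bigr)d(Tx,x)\leq\frac{\al^n}{1-\al}\,d(Tx,x).
\end{equation*}
Since $0<\al<1$, the right-hand side tends to $0$ as $n\to\infty$, uniformly in $m$, so $\{x_n\}$ is Cauchy.

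By completeness there is $z\in X$ with $x_n\to z$. Because $T$ is a contraction it is (Lipschitz) continuous, whence $Tz=\lim_n Tx_n=\lim_n x_{n+1}=z$, so $z$ is a fixed point, and by construction $T^nx\to z$ for the chosen $x$; since $x$ was arbitrary, this holds for every starting point. For uniqueness, if $z$ and $w$ are both fixed, then $d(z,w)=d(Tz,Tw)\leq\al\,d(z,w)$, and as $\al<1$ this forces $d(z,w)=0$, i.e.\ $z=w$.

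The argument is almost entirely routine; the one genuine step is the Cauchy estimate, and the only subtlety there is to bound the tail of the geometric series \emph{uniformly in} $m$ rather than merely controlling consecutive differences $d(x_{n+1},x_n)$, which alone would not suffice to conclude the Cauchy property. Everything else---continuity of $T$ and the contractive squeeze used for uniqueness---follows immediately from the single hypothesis \eqref{eqn:Banach contraction}.
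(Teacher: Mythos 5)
Your proof is correct and complete; it is the classical Banach argument (geometric decay of consecutive distances, summation of the geometric series to get the Cauchy tail bound, completeness, continuity, and the contractive squeeze for uniqueness). It is, however, not the route this paper takes: the paper states Theorem \ref{thm:Banach} without proof, as classical background, and its own machinery would derive it by a genuinely different and softer argument. In the paper's framework one sets $x_n=T^nx$ and $\m=d$, notes that $d(x_n,x_{n+1})\leq\al^n d(x,Tx)\to0$ (or simply invokes Proposition \ref{prop:contractive-sequences}, since a strict contraction satisfies \eqref{eqn:contractive-sequences}), and then observes that \eqref{eqn:Banach contraction} yields the $\e$--$\de$ condition of Lemma \ref{lem:equiv-conditions-m-contractive-sequence}: taking $\de=\e(1-\al)/\al$, if $d(x_p,x_q)<\e+\de$ then $d(x_{p+1},x_{q+1})\leq\al(\e+\de)\leq\e$. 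Hence the Picard sequence is $d$-contractive, and Theorem \ref{thm:m-contractive-sequences-are-Cauchy} (ultimately the Cauchy criterion of Lemma \ref{lem:technical-lemma}) shows it is Cauchy \emph{without ever summing a series}. The trade-off is instructive: your classical proof buys an explicit a priori estimate $d(x_m,x_n)\leq\al^n d(x,Tx)/(1-\al)$, hence a convergence rate, which the soft argument cannot provide; the paper's argument buys generality, since the identical Cauchy criterion applies verbatim to Meir-Keeler, \'Ciri\'c-Matkowski, Proinov, and asymptotic contractions, where no geometric decay is available and your series estimate has no analogue. The concluding steps of your proof (completeness, continuity, uniqueness) coincide with what the paper does in the final paragraph of Theorem \ref{thm:fixed-point-theorem}.
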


The Banach contraction principle is fundamental in fixed point theory.
It has been extended to some larger classes of contractive mappings
by replacing the strict contractive condition \eqref{eqn:Banach contraction}
by weaker conditions of various types;  see, for example,
\cite{Boyd-Wong-1969,
Kannan-1969,
Singh-1969,
Ciric-1971,
Bianchini-1972,
Sehgal-1972,
Chatterjea-1972,
Hardy-Rogers-1973,
Ciric-1974,
Caristi-1976,
Ekeland-1974,
Subrahmanyam-1974,
Ciric-1981}.
A comparative study of some of these results have been made
by Rhoades \cite{Rhoades-1977}.

There are thousands of theorems which assure the existence of a fixed point of
a self-map $T$ of a complete metric space $X$. These theorems can be categorized
into different types, \cite{Suzuki-new-type-of-fp-thm-2009}. One type, and perhaps
the most common one, is called the Leader-type \cite{Leader-1983}: the mapping $T$
has a unique fixed point, and the fixed point can always be found by using
Picard iterates $\set{T^nx}$, beginning with some initial choice $x$ in $X$.
Most of the theorems belong to Leader-type.
For instance, \'Ciri\'c in \cite{Ciric-1974} defined the class of \emph{quasi-contractions}
on a metric space $X$ consisting of all mappings $T$ for which there
exists $\al$, $0<\al<1$, such that $d(Tx, Ty) \leq \al \m(x,y)$, for every $x,y\in X$, where
\begin{equation*}
  \m(x,y)=\max\set{d(x, y), d(x, Tx), d(y, Ty), d(x, Ty), d(y, Tx)}.
\end{equation*}
The results presented in \cite{Ciric-1974} show that the condition of
quasi-contractivity implies all conclusions of Banach contraction principle.
We remark that \'Ciri\'c's quasi-contraction is considered as the most general
among contractions listed in \cite{Rhoades-1977}.

Another interesting generalization of Banach contraction principle was
given, in 1969, by Meir and Keeler \cite{Meir-Keeler-1969}.
They defined \emph{weakly uniformly strict contraction} mappings and proved
a fixed point theorem that generalized the fixed point theorem of Boyd and Wong \cite{Boyd-Wong-1969}
and extended the principle to wider classes of maps than those covered in \cite{Rhoades-1977}.

\begin{dfn}\label{dfn:Meir-Keeler-contraction}
  A mapping $T$ on a metric space $X$ is said to be a \emph{Meir-Keeler contraction}
  (or a \emph{weakly uniformly strict contraction} \cite{Meir-Keeler-1969})
  if, for every $\e>0$, there exists $\de>0$ such that
  \begin{equation*}
    \forall\, x,y\in X,\quad
    \e \leq d(x,y) < \e+\de\ \Longrightarrow\
    d(Tx,Ty)<\e.
  \end{equation*}
\end{dfn}

\begin{thm}[Meir and Keeler \cite{Meir-Keeler-1969}]
\label{thm:Meir-Keeler}
  Let $X$ be a complete metric space and let $T$ be a Meir-Keeler contraction
  on $X$. Then $T$ has a unique fixed point $z$, and $T^nx\to z$, for every $x\in X$.
\end{thm}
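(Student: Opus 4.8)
The plan is to treat the three assertions---existence, uniqueness, and convergence of every Picard orbit---in turn, isolating the Cauchy property of the iterates as the decisive step. First I would record two elementary consequences of Definition~\ref{dfn:Meir-Keeler-contraction}. Taking $\e=d(x,y)$ for $x\neq y$ shows at once that $T$ is strictly contractive, $d(Tx,Ty)<d(x,y)$; in particular $T$ is continuous and can have at most one fixed point, which settles uniqueness. Secondly, for a fixed $\e>0$ with associated $\de>0$, combining the window condition with strict contractivity on distances below $\e$ yields the consolidated implication
\[
  d(x,y)<\e+\de \ \Longrightarrow\ d(Tx,Ty)<\e,
\]
valid for all $x,y\in X$. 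This merged form is what makes the later estimates clean.

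Next, fix $x\in X$, write $x_n=T^nx$ and $c_n=d(x_n,x_{n+1})$. Strict contractivity forces $\{c_n\}$ to be nonincreasing, hence $c_n\downarrow c$ for some $c\geq 0$. I would rule out $c>0$ by the standard \CM{}-style argument: choosing $\de$ for $\e=c$ and an index $n$ with $c\leq c_n<c+\de$, the defining implication gives $c_{n+1}=d(Tx_n,Tx_{n+1})<c$, contradicting $c_{n+1}\geq c$. Thus $c_n\to 0$.

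The heart of the matter, and the step I expect to be the main obstacle, is upgrading $c_n\to 0$ to the Cauchy property of $\{x_n\}$; this is exactly the gap that the lemma advertised in the abstract is meant to close. I would handle it directly by an invariance argument. Given $\e>0$ take $\de\in(0,\e)$ as above and choose $N$ with $c_N<\de$. Using the consolidated implication I would prove by induction on $n\geq N$ that $d(x_N,x_n)<\e+\de$: the inductive step estimates $d(x_N,x_{n+1})\leq c_N+d(Tx_N,Tx_n)<\de+\e$, where $d(Tx_N,Tx_n)<\e$ follows from the induction hypothesis $d(x_N,x_n)<\e+\de$. Consequently the whole tail $\{x_n:n\geq N\}$ lies in a ball of radius $\e+\de$ about $x_N$, so $d(x_m,x_n)<2(\e+\de)<4\e$ for $m,n\geq N$; since $\e$ is arbitrary this is precisely the Cauchy condition.

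Finally, completeness yields $x_n\to z$ for some $z\in X$, and continuity of $T$ gives $Tz=\lim Tx_n=\lim x_{n+1}=z$, so $z$ is a fixed point. Applying the same reasoning to an arbitrary initial point shows its orbit is Cauchy and converges to a fixed point, which by uniqueness must be $z$; hence $T^nx\to z$ for every $x\in X$. The only delicate point throughout is the Cauchy step, where the triangle-inequality ``leak'' $c_N$ must be absorbed---the reason I route the argument through the merged implication and the invariant ball rather than through a direct comparison of $d(x_n,x_m)$ with $d(x_{n+1},x_{m+1})$, which does not close on its own.
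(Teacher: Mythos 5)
Your proof is correct, and every step checks out: the merged implication $d(x,y)<\e+\de\Rightarrow d(Tx,Ty)<\e$ is a valid consolidation of the Meir-Keeler condition with strict contractivity, the monotone argument gives $c_n\to0$, and the ball-invariance induction (with $\de$ shrunk below $\e$ and $N$ chosen so that $c_N<\de$) does close, since the leak $c_N$ is strictly absorbed by $\de$. However, your route is genuinely different from the paper's at the decisive Cauchy step. The paper treats this theorem as a special case of its general machinery: the orbit $x_n=T^nx$ is $d$-contractive in the sense of Definition~\ref{dfn:m-contractive-sequence}, Proposition~\ref{prop:contractive-sequences} yields $d(x_n,x_{n+1})\to0$, and Cauchyness then comes from Lemma~\ref{lem:technical-lemma} (via Theorem~\ref{thm:m-contractive-sequences-are-Cauchy}), whose proof is a Geraghty-style argument by contradiction: assuming the sequence is not Cauchy, one selects indices $s_n<t_n$ with $d(x_{s_n},x_{t_n})>\e$ and $t_n$ minimal, and plays $\limsup d(x_{p_n},x_{q_n})\leq\e$ against $d(x_{p_n+\nu_n},x_{q_n+\nu_n})>\e$. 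Your direct induction is shorter and entirely self-contained for this particular theorem, but it leans on features specific to the Meir-Keeler setting: a fixed center $x_N$, a contraction hypothesis that acts after a single application of $T$, and the distance function $d$ itself as the control. The paper's lemma dispenses with all three, which is exactly what lets the same argument cover \'Ciri\'c-Matkowski contractions, Jachymski- and Proinov-type conditions where $d(x,y)$ is replaced by an auxiliary function $\m(x,y)$, and Suzuki's asymptotic contractions where the conclusion only controls $d(T^{p+\nu}x,T^{q+\nu}x)$ after $\nu$ iterates---situations where the invariant-ball induction does not transfer without substantial reworking. In short: you trade generality for elementarity; both proofs are sound.
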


The Meir and Keeler's generalized version of Banach contraction principle
initiated a lot of work in this direction and led to some important contribution
in metric fixed point theory; see, for example,
\cite{Maiti-Pal-1978,
Park-Rhoades-1981,
Rao-Rao-1985,
Jachymski-1995,
Cho-Murthy-Jungck-2000}.
The following theorem of \'Ciri\'c \cite{Ciric-1981} and Matkowski \cite[Theorem 1.5.1]{Kuczma}
generalizes the above Meir-Keeler fixed point theorem.

\begin{dfn}
\label{dfn:Ciric-Matkowski contraction}
  A mapping $T$ on a metric space $X$ is said to be
  a \emph{\'Ciri\'c-Matkowski contraction} if $d(Tx,Ty)<d(x,y)$ for
  every $x,y\in X$ with $x\neq y$, and, for every $\e>0$, there exists $\de>0$ such that
  \begin{equation}\label{eqn:Ciric-Matkowski-contraction}
    \forall x,y\in X, \quad
    \e< d(x,y) < \e+\de \Longrightarrow d(Tx,Ty) \leq \e.
  \end{equation}
\end{dfn}

Obviously, the class of \'Ciri\'c-Matkowski contractions contains the class
of Meir-Keeler contractions. As it is mentioned in \cite[Proposition 1]{Jachymski-1995},
it is easy to see that condition \eqref{eqn:Ciric-Matkowski-contraction} in Definition \ref{dfn:Ciric-Matkowski contraction} can be replaced by the following:
\begin{equation*}
  \forall\, x,y\in X,\quad
  d(x,y) < \e+\de\ \Longrightarrow\
  d(Tx,Ty)\leq \e.
\end{equation*}

\begin{thm}[\'Ciri\'c \cite{Ciric-1981}, Matkowski \cite{Kuczma}]
\label{thm:Ciric-Matkowski}
  Let $X$ be a complete metric space and let $T$ be a \'Ciri\'c-Matkowski contraction
  on $X$. Then $T$ has a unique fixed point $z$, and $T^nx\to z$, for every $x\in X$.
\end{thm}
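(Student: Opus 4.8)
The plan is to run the classical four-step scheme adapted to the \CM\ setting: (i) consecutive distances along a Picard orbit tend to $0$; (ii) the orbit is Cauchy; (iii) its limit is a fixed point; (iv) the fixed point is unique. Two easy observations come first. Since $d(Tx,Ty)\le d(x,y)$ holds for all $x,y$ (with strictness when $x\neq y$), the map $T$ is nonexpansive and hence continuous; and uniqueness is immediate, for two distinct fixed points $z\neq w$ would give $d(z,w)=d(Tz,Tw)<d(z,w)$, which is absurd.

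Next I would fix $x\in X$, write $x_n=T^nx$ and $c_n=d(x_n,x_{n+1})$. If $c_n=0$ for some $n$ then $x_n$ is a fixed point and there is nothing more to prove, so assume $c_n>0$ for all $n$. Then $c_{n+1}=d(Tx_n,Tx_{n+1})<c_n$, so $(c_n)$ decreases to some $c\ge 0$. To force $c=0$, suppose $c>0$ and apply \eqref{eqn:Ciric-Matkowski-contraction} with $\e=c$ to get $\de>0$; since $c_n\downarrow c$ there is an $n$ with $c<c_n<c+\de$, whence $c_{n+1}\le c$, contradicting $c_{n+1}>c$. Thus $c_n\to 0$.

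The heart of the argument, and the step I expect to be the main obstacle, is the Cauchy property, because the non-strict conclusion $d(Tx,Ty)\le\e$ blocks the naive ``choose minimal indices and extract a strict contradiction'' route. Instead I would argue by forward-invariance of a ball. Fix $\e>0$, take the corresponding $\de$ from \eqref{eqn:Ciric-Matkowski-contraction}, and shrink it so that $0<\de\le\e$ (legitimate, since narrowing the hypothesis keeps the implication valid); then choose $N$ with $c_N<\de$. I claim $d(x_N,x_m)<\e+\de$ for every $m\ge N$, proved by induction on $m$. In the inductive step one estimates $d(x_N,x_{m+1})\le c_N+d(Tx_N,Tx_m)$ and splits into cases: if $d(x_N,x_m)\le\e$ use strict contractivity, while if $\e<d(x_N,x_m)<\e+\de$ use the \CM\ condition; either way $d(Tx_N,Tx_m)\le\e$, so $d(x_N,x_{m+1})<c_N+\e<\de+\e$, closing the induction. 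Consequently $\diam\set{x_m:m\ge N}\le 2(\e+\de)\le 4\e$, and as $\e>0$ is arbitrary this shows $(x_n)$ is Cauchy.

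Finally, completeness yields $x_n\to z$ for some $z\in X$, and continuity of $T$ gives $Tz=\lim Tx_n=\lim x_{n+1}=z$, so $z$ is a fixed point. By the uniqueness established at the outset, every Picard orbit converges to this same $z$, which delivers $T^nx\to z$ for every $x\in X$ and completes the proof.
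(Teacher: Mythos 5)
Your proof is correct, but it follows a genuinely different route from the one the paper has in mind. The paper never argues via an invariant ball: its proof of this theorem is an instance of its general machinery. Taking $\m=d$ (so that \eqref{eqn:limsup m <= limsup d} holds trivially with equality), the \CM\ condition says exactly that every Picard orbit is $\m$-contractive in the sense of Definition \ref{dfn:m-contractive-sequence}; contractivity of $T$ gives \eqref{eqn:contractive-sequences}, so Proposition \ref{prop:contractive-sequences} yields $d(x_n,x_{n+1})\to0$ and then Theorem \ref{thm:m-contractive-sequences-are-Cauchy} (ultimately Lemma \ref{lem:technical-lemma}, a Geraghty-style argument: negate Cauchyness, pick minimal indices $t_n$ with $d(x_{s_n},x_{t_n})>\e$, and derive $\limsup d(x_{p_n},x_{q_n})\leq\e$ to force a contradiction through the subsequence formulation) gives the Cauchy property; completeness, continuity of the nonexpansive map $T$, and strict contractivity finish as in your last step. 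Your argument instead is the classical Meir--Keeler/Matkowski proof: after showing $c_n\downarrow 0$, you shrink $\de$ so that $\de\leq\e$, choose $N$ with $c_N<\de$, and prove by induction that the tail stays in the ball $\set{y: d(x_N,y)<\e+\de}$, splitting the inductive step into the cases $d(x_N,x_m)\leq\e$ (strict contractivity; note the trivial subcase $x_N=x_m$ should be mentioned) and $\e<d(x_N,x_m)<\e+\de$ (the \CM\ condition). What your approach buys is a short, fully self-contained proof of this particular theorem, with no auxiliary lemmas. What the paper's approach buys is reusability: the same lemma simultaneously handles general gauge functions $\m$ as in Proinov's and Jachymski's theorems, and shifted iterates $\nu_n$ as in Suzuki's asymptotic contractions, situations where your ball-invariance induction does not carry over (it relies on the one-step, $\m=d$ structure of the \CM\ condition).
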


In 1995, Jachymski \cite[Theorem 2]{Jachymski-1995} replaced the distance function $d(x, y)$ in
the \'Ciri\'c-Matkowski theorem by the following:
\begin{equation*}
  \m(x, y) = \max \set{d(x, y), d(x, T x), d(y, Ty), [d(x, Ty) + d(y, T x)]/2}.
\end{equation*}
As in \cite{Proinov-2006}, we refer to this result of Jachymski as
the Jachymski-Matkowski theorem because it is equivalent to a result
of Matkowski \cite[Theorem 1]{Matkowski-1980}.

In 2006, extending \'Ciri\'c's quasi-contraction to a very general setting,
Proinov \cite{Proinov-2006} obtained the following remarkable fixed point theorem
generalizing Jachymski-Matkowski theorem.

\begin{dfn}
\label{dfn:contractive+asymptotically-regular}
  A self-map $T$ of a metric space $X$ is said to be
  \emph{contractive} \cite{Edelstein-62} if $d(Tx, Ty) < d(x,y)$,
  for all $x,y\in X$ with $x \neq y$; it is called
  \emph{asymptotically regular} \cite{Browder-1966} if
  $d(T^nx, T^{n+1}x)\to0$, for each $x\in X$.
\end{dfn}

\begin{thm}[Proinov \cite{Proinov-2006}]
\label{thm:Proinov}
  Let $T$ be a continuous and asymptotically regular self-map
  of a complete metric space $X$. Fix $\ga\geq0$, and define
  \begin{equation}\label{eqn:Proinov-m(x,y)}
    \m(x,y) = d(x, y) + \ga[d(x, T x) + d(y, Ty)].
  \end{equation}
  Suppose $d(Tx, Ty) < \m(x, y)$ for all $x, y\in X$ with $x\neq y$,
  and, for any $\e>0$, there exists $\de>0$ such that $\m(x, y) < \de+\e$
  implies $d(T x, Ty)\leq \e$.
  Then $T$ has a unique fixed point $z$, and the Picard iterates of $T$
  converge to $z$.
\end{thm}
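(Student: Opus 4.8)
The plan is to fix an arbitrary $x\in X$, write $x_n=T^nx$ and $c_n=d(x_n,x_{n+1})$, and to reduce everything to showing that $\set{x_n}$ is a Cauchy sequence. Asymptotic regularity gives $c_n\to0$, and this is the only way that hypothesis enters. Once $\set{x_n}$ is Cauchy, completeness provides a limit $z=\lim_n x_n$; continuity of $T$ then gives $Tz=\lim_n Tx_n=\lim_n x_{n+1}=z$, so $z$ is a fixed point. For uniqueness, if $z,w$ were distinct fixed points, then the strict condition $d(Tz,Tw)<\m(z,w)$ would read $d(z,w)<d(z,w)+\ga[d(z,Tz)+d(w,Tw)]=d(z,w)$, a contradiction; hence the fixed point is unique, and since the whole argument applies to every starting point, all Picard orbits converge to this same $z$.

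The heart of the proof — and the step I expect to be the main obstacle — is the Cauchyness of $\set{x_n}$, which I would establish by an induction on the index gap that converts the uniform Meir-Keeler estimate into control of the tail diameter. Fix $\e>0$ and let $\de>0$ be furnished by the hypothesis, so that $\m(u,v)<\e+\de$ implies $d(Tu,Tv)\le\e$; shrinking $\de$ if necessary I may assume $\de\le\e$. Since $c_k\to0$, choose $N$ so large that $c_k<\de/2$ and $\ga(c_k+c_l)<\de/2$ for all $k,l\ge N$. The claim is that
\[
d(x_n,x_{n+j})<\e+\tfrac{\de}{2}\qquad\text{for all }n\ge N\text{ and all }j\ge0.
\]

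I would prove this by induction on $j$, the case $j=0$ being trivial. For the inductive step the key point is that the induction hypothesis already controls the modified distance: for $n\ge N$,
\[
\m(x_n,x_{n+j})=d(x_n,x_{n+j})+\ga(c_n+c_{n+j})<\bigl(\e+\tfrac{\de}{2}\bigr)+\tfrac{\de}{2}=\e+\de,
\]
so the Meir-Keeler condition yields $d(x_{n+1},x_{n+j+1})=d(Tx_n,Tx_{n+j})\le\e$. A single triangle step then closes the induction, since $d(x_n,x_{n+j+1})\le c_n+d(x_{n+1},x_{n+j+1})<\tfrac{\de}{2}+\e$. With the claim in hand, $\diam\set{x_k:k\ge N}\le\e+\tfrac{\de}{2}\le2\e$, and letting $\e\to0$ shows the tail diameters tend to $0$; thus $\set{x_n}$ is Cauchy.

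The delicate part is precisely the bookkeeping in this induction: the uniform gain from the Meir-Keeler step (passing from $\m<\e+\de$ to an image pair within $\e$) must exactly absorb both the loss $c_n<\de/2$ incurred by the triangle inequality and the extra terms $\ga(c_n+c_{n+j})$ coming from $\m$ — and it is here that asymptotic regularity is indispensable, since these $\ga$-terms must be swallowed by the same $\de$-budget. I would isolate this induction as the ``Cauchy lemma'' advertised in the abstract, stating it for an abstract sequence with $d(x_n,x_{n+1})\to0$ and a Meir-Keeler type control of $d(x_{n+1},x_{m+1})$ in terms of $d(x_n,x_m)$, so that the fixed point theorem follows by merely checking its hypotheses. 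Note that the strict contractive inequality is needed only for uniqueness; existence and convergence rest on the uniform condition, continuity, and asymptotic regularity alone.
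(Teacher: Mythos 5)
Your proof is correct, but it follows a genuinely different route from the paper's. The paper never proves Proinov's theorem directly: it obtains it as the special case $s=t=1$, $\al=\be=\ga$ of Theorem \ref{thm:Generalized-Proinov}, whose Cauchy step ultimately rests on Lemma \ref{lem:technical-lemma} --- a Geraghty-style argument by contradiction, in which failure of the Cauchy property produces subsequences $\set{x_{p_n}}$, $\set{x_{q_n}}$ with $d(x_{p_n},x_{q_n})>\e$ and $d(x_{p_n},x_{q_n-1})\leq\e$, so that $\limsup d(x_{p_n},x_{q_n})\leq\e$, contradicting the contractive condition (packaged through the $\m$-contractive machinery of Theorems \ref{thm:main} and \ref{thm:fixed-point-theorem}). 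You instead prove Cauchyness directly, by induction on the index gap $j$: the hypothesis $d(x_n,x_{n+j})<\e+\de/2$ for all $n\geq N$ gives $\m(x_n,x_{n+j})<\e+\de$ (the $\ga$-terms being absorbed thanks to asymptotic regularity), the Meir-Keeler condition then gives $d(x_{n+1},x_{n+j+1})\leq\e$, and one triangle step restores the bound; this bookkeeping is sound, and the resulting tail-diameter bound $\e+\de/2\leq 2\e$ yields the Cauchy property with no contradiction and no subsequences. Your remark that the strict inequality $d(Tx,Ty)<\m(x,y)$ enters only through uniqueness also matches the paper's usage. The trade-off: your induction is more elementary and self-contained for this particular theorem, but it is tied to the shift-by-one, uniform-in-$(x,y)$ form of Proinov's condition; the paper's lemma is built for reuse --- it tolerates a general function $\m$ satisfying \eqref{eqn:limsup m <= limsup d}, contraction hypotheses imposed only from some index $N$ on, and variable shifts $\nu_n$ --- which is what later yields, from a single criterion, the generalized Proinov theorem with orbital diameters, Geraghty's theorem, and Suzuki's theorem on asymptotic contractions of the final type.
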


After establishing a technical lemma in section \ref{sec:technical-lemma},
we present, in section \ref{sec:fixed-point-theorems}, a fixed point theorem
that generalizes the Proinov's Theorem \ref{thm:Proinov}.
We shall also discuss asymptotic contractions of Meir-Keeler type
in section \ref{sec:asymptotic-contractions}. The significance
of our results is their simple proofs despite their generality.

\medskip \noindent
\textbf{Convention.}
Since we are mainly concerned with Picard iterates $\set{T^nx}_{n=0}^\infty$ of a given self-map $T$,
it is more convenient to take $\N_0=\N\cup\set{0}$ as the indexing set of all sequences
in this paper.

\bigskip

\section{A Technical Lemma}
\label{sec:technical-lemma}

The lemma we present in this section is fundamental in our discussion.
It provides a criterion for sequences in metric spaces to be Cauchy.
As a result, it can be easily verified that, if $T$ is a contraction
of Meir-Keeler type, then the Picard iterates of $T$
are Cauchy sequences.

\begin{lem}
\label{lem:technical-lemma}
  Let $\set{x_n}$ be a sequence in a metric space. If $d(x_n,x_{n+1})\to0$,
  then the following condition implies that $\{x_n\}$ is a Cauchy sequence.
  \begin{itemize}
    \item for every $\e>0$, there exists a sequence $\set{\nu_n}$
    of nonnegative integers such that, for any two subsequences
    $\{x_{p_n}\}$ and $\{x_{q_n}\}$, if\/ $\limsup d(x_{p_n},x_{q_n})\leq \e$,
    then, for some $N$,
    \begin{equation}\label{eqn:d(xpn+nun,xqn+nun)<=e}
      d(x_{p_n+\nu_n},x_{q_n+\nu_n}) \leq \e, \qquad (n\geq N).
    \end{equation}
  \end{itemize}
\end{lem}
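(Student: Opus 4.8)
The plan is to argue by contradiction. Suppose $\{x_n\}$ is not Cauchy; then there is some $\epsilon_0>0$ such that for every $N$ one can find indices $i,j\ge N$ with $d(x_i,x_j)>\epsilon_0$. The first step is the classical ``minimal index'' device: given a threshold $M$, take $i<j$ with $i,j\ge M$ and $d(x_i,x_j)>\epsilon_0$, set $s:=i\ge M$, and let $t$ be the least index exceeding $s$ with $d(x_s,x_t)>\epsilon_0$. Minimality forces $d(x_s,x_{t-1})\le\epsilon_0$, so by the triangle inequality $\epsilon_0<d(x_s,x_t)\le\epsilon_0+d(x_{t-1},x_t)$, and since $d(x_{t-1},x_t)\to0$ these boundary pairs sit barely above $\epsilon_0$. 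In this way I obtain, for each $n$, indices $s_n<t_n$ with $s_n$ as large as I please and $\epsilon_0<d(x_{s_n},x_{t_n})\le\epsilon_0+\eta_n$ for a prescribed $\eta_n\to0$.

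Next I would invoke the hypothesis with $\epsilon=\epsilon_0$ to fix a single sequence $\{\nu_n\}$, and then build the two subsequences so as to force the forbidden behaviour. The idea is to place each boundary pair at the \emph{shifted} position: set $p_n=s_n-\nu_n$ and $q_n=t_n-\nu_n$ (legitimate once $s_n\ge\nu_n$). Then by construction $d(x_{p_n+\nu_n},x_{q_n+\nu_n})=d(x_{s_n},x_{t_n})>\epsilon_0$ for every $n$, which flatly contradicts the conclusion \eqref{eqn:d(xpn+nun,xqn+nun)<=e}, \emph{provided} I can also secure the premise $\limsup d(x_{p_n},x_{q_n})\le\epsilon_0$. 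Choosing the $s_n$ strictly increasing with large gaps makes $\{x_{p_n}\}$ and $\{x_{q_n}\}$ genuine subsequences with $p_n<q_n$.

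The hard part is verifying the premise, i.e. controlling the backward drift incurred in passing from $(s_n,t_n)$ to $(s_n-\nu_n,t_n-\nu_n)$, since $\{\nu_n\}$ is handed to me and may be unbounded. Writing $d(x_{s_n-\nu_n},x_{s_n})\le\sum_{k=s_n-\nu_n}^{s_n-1}d(x_k,x_{k+1})$ (and similarly at $t_n$), I would tame this sum using $d(x_k,x_{k+1})\to0$: for each $n$ choose $L_n$ with $d(x_k,x_{k+1})<\eta_n/(\nu_n+1)$ for all $k\ge L_n$, and then require $s_n\ge L_n+\nu_n$. Each drift is a sum of at most $\nu_n$ terms below $\eta_n/(\nu_n+1)$, hence $<\eta_n$. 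Combining, $d(x_{p_n},x_{q_n})\le d(x_{s_n},x_{t_n})+2\eta_n\le\epsilon_0+3\eta_n$, so $\limsup d(x_{p_n},x_{q_n})\le\epsilon_0$. This is the crux: the near-$\epsilon_0$ far pairs are pushed out to indices so large that a fixed-length backward shift becomes negligible, which is precisely where the assumption $d(x_n,x_{n+1})\to0$ is spent.

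With both the premise and the violation of the conclusion in hand, the constructed subsequences contradict the assumed condition, so $\{x_n\}$ must be Cauchy. The degenerate case $\nu_n=0$ needs no drift estimate (the pair is unmoved, and $d(x_{p_n},x_{q_n})=d(x_{s_n},x_{t_n})\to\epsilon_0$ directly), while strict monotonicity of $\{p_n\}$ and $\{q_n\}$ is arranged by taking each threshold $M_n$, and hence $s_n$, far beyond all previously used indices.
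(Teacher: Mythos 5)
Your proof is correct and follows essentially the same route as the paper's: the same contradiction setup, the same minimal-index device producing boundary pairs $(s_n,t_n)$ with $d(x_{s_n},x_{t_n})$ barely above $\e$, the same backward shift $p_n=s_n-\nu_n$, $q_n=t_n-\nu_n$, and the same drift estimate scaled by $1/(\nu_n+1)$ (your $\eta_n$ plays the role of the paper's $1/n$, and your $L_n$ the role of its $k_n$). The only cosmetic difference is that you bound $d(x_{p_n},x_{q_n})$ via $d(x_{s_n},x_{t_n})\leq\e+\eta_n$ while the paper routes the triangle inequality through $x_{t_n-1}$ where $d(x_{s_n},x_{t_n-1})\leq\e$; you are in fact slightly more careful than the paper about making $\set{p_n}$ and $\set{q_n}$ strictly increasing.
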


It should be mentioned that the following proof of the lemma actually stems from the work of
Geraghty in \cite{Geraghty-73}.

\begin{proof}
  Assume, towards a contradiction, that $\set{x_n}$ is not a Cauchy sequence.
  Then, there exists $\e>0$ such that
  \begin{equation}\label{eqn:negation-of-Cauchy}
    \forall k\in\N,\ \exists\, p,q\geq k,
    \quad d(x_p,x_q)>\e.
  \end{equation}
  For this $\e$, let $\{\nu_n\}$ be the sequence of
  nonnegative integers given by the assumption.
  Since $d(x_n,x_{n+1})\to0$, there exist positive integers $k_1<k_2<\dotsb$ such that
  \[
    d(x_\ell,x_{\ell+1})<\frac1{n(\nu_n+1)}, \qquad (\ell\geq k_n).
  \]
  For each $k_n+\nu_n$, by \eqref{eqn:negation-of-Cauchy}, there exist
  integers $s_n$ and $t_n$ such that $t_n>s_n\geq k_n+\nu_n$ and
  $d(x_{s_n},x_{t_n})>\e$. We let $t_n$ be the smallest such integer
  so that $d(x_{s_n},x_{t_n-1})\leq\e$.
  Take $p_n=s_n-\nu_n$ and $q_n=t_n-\nu_n$. Then $q_n>p_n \geq k_n$, and
  \[
    d(x_{p_n+\nu_n},x_{q_n+\nu_n})>\e
    \quad \text{and} \quad
    d(x_{p_n+\nu_n},x_{q_n+\nu_n-1}) \leq \e.
  \]
  Using triangle inequality, we have, for every $n$,
  \begin{equation*}
    d(x_{p_n},x_{q_n})
    \leq \frac{\nu_n}{n(\nu_n+1)}+d(x_{p_n+\nu_n},x_{q_n+\nu_n-1}) +\frac{\nu_n+1}{n(\nu_n+1)}.
  \end{equation*}
  This implies that $\limsup d(x_{p_n},x_{q_n}) \leq \e$. Since
  $d(x_{p_n+\nu_n},x_{q_n+\nu_n})>\e$, for every $n$,
  we get a contradiction.
\end{proof}

\begin{rem}
  In the above proof of the lemma, if we
  apply the triangle inequality for the second time, we get
  \[
    \e < d(x_{p_n+\nu_n},x_{q_n+\nu_n})
     \leq \frac{\nu_n}{n(\nu_n+1)}+d(x_{p_n},x_{q_n}) +\frac{\nu_n}{n(\nu_n+1)}.
  \]
  This implies that $\e\leq\liminf d(x_{p_n},x_{q_n})$.
  Hence we have $d(x_{p_n},x_{q_n})\to\e$.
\end{rem}

Using the following theorem, and its successive corollary, we will be able to give
very simple proofs for theorems mentioned in section \ref{sec:intro}.

\begin{thm}
\label{thm:main}
  Let $X$ be a metric space and let $\{x_n\}$ be a sequence in $X$.
  Suppose $\m$ is a nonnegative function on $X\times X$ such that,
  for any two subsequences $\{x_{p_n}\}$ and $\{x_{q_n}\}$,
  \begin{equation}\label{eqn:limsup m <= limsup d}
      \limsup_{n\to\infty} \m(x_{p_n},x_{q_n})
      \leq \limsup_{n\to\infty} d(x_{p_n},x_{q_n}).
  \end{equation}
  If $d(x_n,x_{n+1})\to0$, then the following condition implies that
  $\set{x_n}$ is Cauchy.
  \begin{itemize}
    \item for every $\e>0$, there exists a sequence $\set{\nu_n}$
    of nonnegative integers such that, for any two subsequences
    $\{x_{p_n}\}$ and $\{x_{q_n}\}$, if\/ $\limsup \m(x_{p_n},x_{q_n})\leq \e$,
    then, for some $N\in\N$,
    \begin{equation*}
      d(x_{p_n+\nu_n},x_{q_n+\nu_n}) \leq \e, \qquad (n\geq N).
    \end{equation*}
  \end{itemize}
\end{thm}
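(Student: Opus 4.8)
The plan is to deduce Theorem~\ref{thm:main} directly from Lemma~\ref{lem:technical-lemma}, using the comparison inequality \eqref{eqn:limsup m <= limsup d} as the bridge that converts the hypothesis phrased in terms of $\m$ into the corresponding hypothesis phrased in terms of $d$. Since $d(x_n,x_{n+1})\to0$ is already assumed, the only thing I need to establish is that $\set{x_n}$ satisfies the bullet condition of Lemma~\ref{lem:technical-lemma}; once that is done, the lemma immediately delivers the Cauchy conclusion.

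To verify that condition, I would fix $\e>0$ and let $\set{\nu_n}$ be the sequence of nonnegative integers furnished, for this $\e$, by the bullet hypothesis of the theorem. I claim this same $\set{\nu_n}$ witnesses the requirement of the lemma. Indeed, suppose $\set{x_{p_n}}$ and $\set{x_{q_n}}$ are subsequences with $\limsup d(x_{p_n},x_{q_n})\leq\e$. Applying \eqref{eqn:limsup m <= limsup d} to this pair gives
\[
  \limsup_{n\to\infty}\m(x_{p_n},x_{q_n})
  \leq \limsup_{n\to\infty} d(x_{p_n},x_{q_n}) \leq \e,
\]
so the premise $\limsup\m(x_{p_n},x_{q_n})\leq\e$ of the theorem's bullet is met. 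The theorem's hypothesis then yields an $N$ for which $d(x_{p_n+\nu_n},x_{q_n+\nu_n})\leq\e$ whenever $n\geq N$, which is exactly the conclusion demanded in the bullet of Lemma~\ref{lem:technical-lemma}.

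Having checked the bullet condition (with the very same $\set{\nu_n}$, for each $\e$), I would invoke Lemma~\ref{lem:technical-lemma} to conclude that $\set{x_n}$ is Cauchy, completing the proof. The argument is essentially a reduction, so there is no serious obstacle; the only point requiring care is the bookkeeping of quantifiers. Specifically, one must observe that the single sequence $\set{\nu_n}$ produced by the theorem's assumption for a given $\e$ carries over unchanged to the lemma, and that inequality \eqref{eqn:limsup m <= limsup d} is assumed for \emph{arbitrary} pairs of subsequences — precisely the generality needed, since it must be applied to whatever pair $\set{x_{p_n}},\set{x_{q_n}}$ arises when testing the lemma's hypothesis.
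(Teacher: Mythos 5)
Your proposal is correct and follows essentially the same route as the paper's own proof: both use inequality \eqref{eqn:limsup m <= limsup d} to convert the hypothesis $\limsup d(x_{p_n},x_{q_n})\leq\e$ into $\limsup \m(x_{p_n},x_{q_n})\leq\e$, then apply the theorem's bullet assumption with the same sequence $\set{\nu_n}$ and invoke Lemma~\ref{lem:technical-lemma}. The quantifier bookkeeping you highlight is exactly the content of the paper's (briefer) argument.
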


\begin{proof}
  Let $\e>0$ and let $\{\nu_n\}$ be the sequence of nonnegative integers given by
  the assumption. Let $\{x_{p_n}\}$ and $\{x_{q_n}\}$ be two subsequences with
  $\limsup d(x_{p_n},x_{q_n})\leq \e$. Then $\limsup \m(x_{p_n},x_{q_n})\leq \e$,
  and thus \eqref{eqn:d(xpn+nun,xqn+nun)<=e} holds. All conditions in
  Lemma \ref{lem:technical-lemma} are fulfilled and so the sequence
  is Cauchy.
\end{proof}

If $\{\nu_n\}$ is a constant sequence, e.g.\ $\nu_n=\nu$ for all $n$,
then we get the following result which is of particular importance.

\begin{cor}
\label{cor:main}
  Suppose the function $\m$ satisfies \eqref{eqn:limsup m <= limsup d} and
  $d(x_n,x_{n+1})\to0$. Then each of the following conditions implies that
  $\set{x_n}$ is Cauchy.
  \begin{enumerate}[\upshape(i)]
    \item \label{item:cor:main:nu=0}
    for every $\e>0$ and for any two subsequences $\{x_{p_n}\}$ and $\{x_{q_n}\}$,
    if \newline
    $\limsup \m(x_{p_n},x_{q_n}) \leq \e$, then, for some $N$,
    \begin{equation*}
      d(x_{p_n},x_{q_n}) \leq \e, \qquad (n\geq N).
    \end{equation*}

    \item \label{item:cor:main:nu=1}
    for every $\e>0$ and for any two subsequences $\{x_{p_n}\}$ and $\{x_{q_n}\}$,
    if \newline
    $\limsup \m(x_{p_n},x_{q_n}) \leq \e$, then, for some $N$,
    \begin{equation*}
      d(x_{p_n+1},x_{q_n+1}) \leq \e, \qquad (n\geq N).
    \end{equation*}

    \item \label{item:cor:main:nu}
    for every $\e>0$, there exists $\nu\in\N$
    such that, for any two subsequences
    $\{x_{p_n}\}$ and $\{x_{q_n}\}$, if\/ $\limsup \m(x_{p_n},x_{q_n})\leq \e$,
    then, for some $N\in\N$,
    \begin{equation*}
      d(x_{p_n+\nu},x_{q_n+\nu}) \leq \e, \qquad (n\geq N).
    \end{equation*}

  \end{enumerate}
\end{cor}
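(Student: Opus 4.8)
The plan is to deduce all three statements as immediate specializations of Theorem \ref{thm:main}, obtained by taking the auxiliary sequence $\set{\nu_n}$ furnished there to be constant. Indeed, the two standing hypotheses of Theorem \ref{thm:main}---namely that $\m$ satisfies \eqref{eqn:limsup m <= limsup d} and that $d(x_n,x_{n+1})\to0$---are exactly the assumptions carried into the present corollary, so in each case it only remains to exhibit a suitable $\set{\nu_n}$ for which the bullet condition of Theorem \ref{thm:main} is met.

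For case \ref{item:cor:main:nu} I would set $\nu_n=\nu$ for every $n$, where $\nu\in\N$ is the integer supplied by the hypothesis. With this choice the requirement $d(x_{p_n+\nu_n},x_{q_n+\nu_n})\leq\e$ of Theorem \ref{thm:main} reads $d(x_{p_n+\nu},x_{q_n+\nu})\leq\e$, which is precisely what \ref{item:cor:main:nu} provides whenever $\limsup\m(x_{p_n},x_{q_n})\leq\e$. Hence the constant sequence $\set{\nu_n}$ fulfils the assumption of Theorem \ref{thm:main}, and that theorem yields that $\set{x_n}$ is Cauchy.

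Cases \ref{item:cor:main:nu=0} and \ref{item:cor:main:nu=1} then fall out as the instances $\nu=0$ and $\nu=1$ of case \ref{item:cor:main:nu}: taking $\nu_n\equiv0$ turns the conclusion of Theorem \ref{thm:main} into $d(x_{p_n},x_{q_n})\leq\e$, matching \ref{item:cor:main:nu=0}, while taking $\nu_n\equiv1$ turns it into $d(x_{p_n+1},x_{q_n+1})\leq\e$, matching \ref{item:cor:main:nu=1}. There is no genuine obstacle here; the only point to observe is that a constant sequence is of course an admissible choice of $\set{\nu_n}$ in Theorem \ref{thm:main}, so each listed condition is nothing but the bullet hypothesis of that theorem specialized to a fixed shift, whence the conclusion is immediate.
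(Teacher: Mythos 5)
Your proposal is correct and is exactly the paper's own argument: the paper introduces the corollary with the remark that taking $\set{\nu_n}$ to be a constant sequence in Theorem \ref{thm:main} yields the result, which is precisely your specialization $\nu_n\equiv\nu$ (with $\nu=0$, $\nu=1$, or the $\nu$ supplied in case \eqref{item:cor:main:nu}). The only cosmetic point is that case \eqref{item:cor:main:nu=0} is not literally an instance of case \eqref{item:cor:main:nu} (which requires $\nu\in\N$), but your actual argument correctly invokes Theorem \ref{thm:main} with the constant sequence $\nu_n\equiv0$, which is admissible there since the $\nu_n$ need only be nonnegative integers.
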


\bigskip

\section{Fixed Point Theorems}
\label{sec:fixed-point-theorems}

In this section, using Theorem \ref{thm:main}, we present our fixed point theorem.

\begin{lem}
\label{lem:equiv-conditions-m-contractive-sequence}
 If $\set{x_n}$ is a sequence in a metric space $X$ and
 $\m$ is a nonnegative function on $X\times X$, then
 the following statements are equivalent:
  \begin{enumerate}[\upshape(i)]
    \item \label{item:it-is-m-contractive-sequence}
    for every $\e>0$, there exists $\de>0$ and $N\in\N_0$ such that
    \begin{equation}\label{eqn:m-contractive-sequence}
      \forall p,q\geq N, \quad
      \m(x_p,x_q) < \e+\de \Longrightarrow d(x_{p+1},x_{q+1}) \leq \e.
    \end{equation}

    \item \label{item:m-contractive-sequence-in-term-of-pnqn}
    for every $\e>0$, and for any two subsequences $\{x_{p_n}\}$ and $\{x_{q_n}\}$,
    if \newline $\limsup \m(x_{p_n},x_{q_n})\leq \e$ then,
    for some $N$,
    \[
      d(x_{p_n+1},x_{q_n+1}) \leq \e, \qquad (n\geq N).
    \]
  \end{enumerate}
\end{lem}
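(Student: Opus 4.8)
The plan is to prove the two implications separately; the forward direction is routine, while the converse requires a diagonal-type construction.

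For (i) $\Rightarrow$ (ii), I would fix $\e>0$, obtain the corresponding $\de>0$ and $N\in\N_0$ from (i), and take any two subsequences $\set{x_{p_n}}$ and $\set{x_{q_n}}$ with $\limsup\m(x_{p_n},x_{q_n})\leq\e$. Since this limsup lies strictly below $\e+\de$, there is an index beyond which $\m(x_{p_n},x_{q_n})<\e+\de$; and since $p_n,q_n\to\infty$, we also have $p_n,q_n\geq N$ eventually. For $n$ past both thresholds, the implication in \eqref{eqn:m-contractive-sequence} gives $d(x_{p_n+1},x_{q_n+1})\leq\e$, which is exactly the conclusion of (ii).

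For the converse, I would argue by contraposition. Assuming (i) fails produces a fixed $\e>0$ such that, for every $\de>0$ and every $N$, there exist indices $p,q\geq N$ with $\m(x_p,x_q)<\e+\de$ yet $d(x_{p+1},x_{q+1})>\e$. I would build the offending subsequences inductively: at step $n$, apply this with $\de=1/n$ and with $N$ chosen strictly larger than all indices already selected, obtaining $p_n,q_n$ satisfying $\m(x_{p_n},x_{q_n})<\e+1/n$ and $d(x_{p_n+1},x_{q_n+1})>\e$. Enforcing a large $N$ at each stage guarantees that $\set{p_n}$ and $\set{q_n}$ are strictly increasing, so $\set{x_{p_n}}$ and $\set{x_{q_n}}$ are genuine subsequences.

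From $\m(x_{p_n},x_{q_n})<\e+1/n$ one gets $\limsup\m(x_{p_n},x_{q_n})\leq\e$, so condition (ii) would force $d(x_{p_n+1},x_{q_n+1})\leq\e$ for all large $n$, contradicting the construction in which $d(x_{p_n+1},x_{q_n+1})>\e$ holds for every $n$. The step I expect to require the most care is this inductive choice of indices: the negation of (i) only supplies indices $p,q\geq N$, so monotonicity is not automatic and must be imposed by hand to ensure the resulting objects qualify as subsequences in the sense demanded by (ii).
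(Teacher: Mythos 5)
Your proof is correct and follows essentially the same route as the paper: a direct eventual-index argument for (i) $\Rightarrow$ (ii), and a contradiction via subsequences built from the negation of (i) with $\de=1/n$ for the converse. The only difference is that you spell out the inductive choice of strictly increasing indices, which the paper's proof leaves implicit.
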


\begin{proof}
 $\eqref{item:it-is-m-contractive-sequence}
 \Rightarrow \eqref{item:m-contractive-sequence-in-term-of-pnqn}$:
 Let $\e>0$ and assume, for subsequences $\{x_{p_n}\}$ and $\{x_{q_n}\}$,
 we have $\limsup \m(x_{p_n},x_{q_n})\leq \e$. There exists,
 by \eqref{item:it-is-m-contractive-sequence},
 some $\de>0$ and $N\in\N_0$ such that \eqref{eqn:m-contractive-sequence}
 holds. Take $N_1\in\N_0$ such that $\m(x_{p_n},x_{q_n}) < \e+\de$
 for $n\geq N_1$. Therefore, we have $d(x_{p_n+1},x_{q_n+1}) \leq \e$,
 for $n>\max\set{N,N_1}$.

 $\eqref{item:m-contractive-sequence-in-term-of-pnqn}
 \Rightarrow \eqref{item:it-is-m-contractive-sequence}$:
 Assume, to get a contradiction, that \eqref{item:it-is-m-contractive-sequence}
 fails to hold. Then there exist $\e>0$ and subsequences $\{x_{p_n}\}$ and $\{x_{q_n}\}$
 such that
 \[
   \m(x_{p_n},x_{q_n}) < \e+\frac1n \quad \text{and} \quad
   \e < d(x_{p_n+1},x_{q_n+1}).
 \]
 This contradicts \eqref{item:m-contractive-sequence-in-term-of-pnqn}
 because $\limsup \m(x_{p_n},x_{q_n}) \leq \e$.
\end{proof}

\begin{dfn}
\label{dfn:m-contractive-sequence}
 Let $X$ be a metric space and let $\m$ be a nonnegative function on
 $X\times X$. A sequence $\set{x_n}$ in $X$ is said to be
 \emph{$\m$-contractive} if it satisfies one (and hence all) of the conditions in
 Lemma \ref{lem:equiv-conditions-m-contractive-sequence}.
\end{dfn}

The following is a direct consequence of Corollary \ref{cor:main} and
the above lemma.

\begin{thm}
\label{thm:m-contractive-sequences-are-Cauchy}
  Let $X$ be a metric space, $\{x_n\}$ be a sequence in $X$, and
  $\m$ be a nonnegative function on $X\times X$ satisfying \eqref{eqn:limsup m <= limsup d}.
  If $\{x_n\}$ is $\m$-contractive and $d(x_n,x_{n+1})\to0$, then $\{x_n\}$ is Cauchy.
\end{thm}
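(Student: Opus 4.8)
The plan is to chain together the two results the theorem explicitly invokes, so that essentially all of the work has already been carried out in the preceding material. First I would unpack what it means for $\set{x_n}$ to be $\m$-contractive: by Definition \ref{dfn:m-contractive-sequence} this is equivalent to either statement of Lemma \ref{lem:equiv-conditions-m-contractive-sequence}, and I would invoke the subsequence formulation \eqref{item:m-contractive-sequence-in-term-of-pnqn}. Spelled out, this says that for every $\e>0$ and for any two subsequences $\set{x_{p_n}}$ and $\set{x_{q_n}}$ with $\limsup \m(x_{p_n},x_{q_n})\leq\e$, there is some $N$ such that $d(x_{p_n+1},x_{q_n+1})\leq\e$ for all $n\geq N$.

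The key observation is that this is verbatim condition \eqref{item:cor:main:nu=1} of Corollary \ref{cor:main}, namely the instance $\nu=1$ of the constant-shift criterion. I would then check that the standing hypotheses of that Corollary are in force: the function $\m$ satisfies the comparison \eqref{eqn:limsup m <= limsup d} by assumption, and $d(x_n,x_{n+1})\to0$ is part of the theorem's hypotheses. With both recorded, Corollary \ref{cor:main} applies and immediately yields that $\set{x_n}$ is Cauchy, which is the desired conclusion.

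Since the entire substance of the argument is concentrated in Lemma \ref{lem:technical-lemma}, Theorem \ref{thm:main}, and Corollary \ref{cor:main}, I do not expect any genuine obstacle here. The only point demanding care is the bookkeeping: matching the subsequence form of the $\m$-contractive condition coming from Lemma \ref{lem:equiv-conditions-m-contractive-sequence}\eqref{item:m-contractive-sequence-in-term-of-pnqn} to item \eqref{item:cor:main:nu=1} of the Corollary, and making sure both hypotheses \eqref{eqn:limsup m <= limsup d} and $d(x_n,x_{n+1})\to0$ are explicitly in place before the invocation. Once that alignment is confirmed, the proof is a single application of the Corollary.
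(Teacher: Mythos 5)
Your proposal is correct and is exactly the argument the paper intends: the paper states this theorem as a ``direct consequence of Corollary \ref{cor:main} and the above lemma,'' which is precisely your chain --- identify the $\m$-contractive condition with Lemma \ref{lem:equiv-conditions-m-contractive-sequence}\eqref{item:m-contractive-sequence-in-term-of-pnqn}, note it is verbatim item \eqref{item:cor:main:nu=1} of Corollary \ref{cor:main}, and apply the corollary under the hypotheses \eqref{eqn:limsup m <= limsup d} and $d(x_n,x_{n+1})\to0$. No gaps; your bookkeeping matches the paper's intended deduction exactly.
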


\begin{cor}
\label{cor:m-contractive-orbits}
  Let $T$ be a self-map of a metric space $X$, and $\m$ be a nonnegative function on $X\times X$.
  Suppose there exists a point $x\in X$ such that
  \begin{enumerate}[\upshape(i)]
    \item for any $\e>0$, there exist $\de>0$ and $N\in\N_0$ such that
    \begin{equation}\label{eqn:m-contractive-orbits}
      \forall p,q\geq N, \quad
       \m(T^px, T^qx) < \de+\e \Longrightarrow d(T^{p+1} x, T^{q+1}x)\leq \e,
    \end{equation}

    \item condition \eqref{eqn:limsup m <= limsup d} holds for any two subsequences
    $\{x_{p_n}\}$ and $\{x_{q_n}\}$ of\/ $\{T^nx\}$.
  \end{enumerate}
  If $d(T^nx,T^{n+1}x)\to0$, then $\set{T^nx}$ is a Cauchy sequence.
\end{cor}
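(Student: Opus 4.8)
The plan is to reduce the statement to Theorem \ref{thm:m-contractive-sequences-are-Cauchy} by viewing the orbit as an abstract sequence. First I would set $x_n = T^n x$ for $n\in\N_0$, so that $\set{x_n}$ becomes a sequence in the metric space $X$ to which the machinery of the previous section applies directly. The whole point of the corollary is that the orbit-specific hypotheses (i) and (ii) are merely the general sequence hypotheses rewritten for this particular sequence.

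The key observation is that hypothesis (i) is nothing but the defining property of an $\m$-contractive sequence. Indeed, rewriting \eqref{eqn:m-contractive-orbits} in terms of $x_n$ gives: for every $\e>0$ there exist $\de>0$ and $N\in\N_0$ such that $\m(x_p,x_q)<\e+\de$ implies $d(x_{p+1},x_{q+1})\leq\e$ whenever $p,q\geq N$. This is precisely condition \eqref{item:it-is-m-contractive-sequence} of Lemma \ref{lem:equiv-conditions-m-contractive-sequence}, so by Definition \ref{dfn:m-contractive-sequence} the sequence $\set{x_n}$ is $\m$-contractive. Meanwhile, hypothesis (ii) states exactly that $\m$ satisfies \eqref{eqn:limsup m <= limsup d} along subsequences of $\set{x_n}$, and the standing assumption $d(T^nx,T^{n+1}x)\to0$ says $d(x_n,x_{n+1})\to0$.

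With these identifications in place, all hypotheses of Theorem \ref{thm:m-contractive-sequences-are-Cauchy} are met, and I would conclude immediately that $\set{x_n}=\set{T^nx}$ is Cauchy. There is essentially no obstacle here beyond bookkeeping: the single point requiring a moment's care is confirming that \eqref{eqn:m-contractive-orbits} matches \eqref{eqn:m-contractive-sequence} verbatim (the harmless reordering of $\de+\e$ versus $\e+\de$), so that the equivalence furnished by Lemma \ref{lem:equiv-conditions-m-contractive-sequence} may be invoked without any further argument. The corollary is thus a direct specialization, which is exactly what makes it convenient for the fixed point applications to follow.
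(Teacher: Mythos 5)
Your proposal is correct and matches the paper's intended argument: the paper states Corollary \ref{cor:m-contractive-orbits} without proof, precisely because it is the direct specialization of Theorem \ref{thm:m-contractive-sequences-are-Cauchy} to the orbit sequence $x_n=T^nx$, with hypothesis (i) identified as the $\m$-contractive condition via Lemma \ref{lem:equiv-conditions-m-contractive-sequence} and Definition \ref{dfn:m-contractive-sequence}, and hypothesis (ii) as condition \eqref{eqn:limsup m <= limsup d}. Your bookkeeping, including the harmless $\de+\e$ versus $\e+\de$ reordering, is exactly the reduction the paper has in mind.
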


The requirement that $d(x_n,x_{n+1})\to0$ is essential in Lemma \ref{lem:technical-lemma}
and its subsequent results. It can, however, be replaced by other conditions.

\begin{prop}
\label{prop:contractive-sequences}
  Let $\{x_n\}$ be a sequence in a metric space $X$ such that
  \begin{equation}\label{eqn:contractive-sequences}
    \begin{cases}
      d(x_{n+1},x_{n+2}) \leq d(x_n,x_{n+1}), & (n\in\N_0), \\
      d(x_{n+1},x_{n+2}) < d(x_n,x_{n+1}), & (\text{if $x_n\neq x_{n+1}$}).
    \end{cases}
  \end{equation}
  If $\m$ satisfies \eqref{eqn:limsup m <= limsup d} and $\{x_n\}$ is $\m$-contractive,
  then $d(x_n,x_{n+1})\to0$ and, hence, $\{x_n\}$ is Cauchy.
\end{prop}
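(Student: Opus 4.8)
The plan is to reduce everything to the single claim that $d(x_n,x_{n+1})\to0$: once this is known, the sequence is $\m$-contractive, satisfies $d(x_n,x_{n+1})\to0$, and $\m$ satisfies \eqref{eqn:limsup m <= limsup d}, so Theorem \ref{thm:m-contractive-sequences-are-Cauchy} immediately yields that $\{x_n\}$ is Cauchy. Thus the entire content lies in proving $d(x_n,x_{n+1})\to0$. Writing $d_n=d(x_n,x_{n+1})$, the first line of \eqref{eqn:contractive-sequences} says that $\{d_n\}$ is non-increasing, and it is bounded below by $0$; hence $d_n$ decreases to some limit $\e\geq0$, and the goal becomes to show $\e=0$.

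I would argue by contradiction, assuming $\e>0$. Since $d_n\geq\e>0$ for every $n$, no two consecutive terms coincide, that is $x_n\neq x_{n+1}$ for all $n$; the second line of \eqref{eqn:contractive-sequences} then forces the strict inequality $d_{n+1}<d_n$ for every $n$. In particular $\{d_n\}$ strictly decreases to $\e$, so $d_n>\e$ for all $n$. This strict positivity of the gap above the limit is the crucial structural fact, and it is precisely where the second (strict) line of \eqref{eqn:contractive-sequences} is indispensable.

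It remains to contradict $d_n>\e$ using the $\m$-contractivity. I would apply the comparison \eqref{eqn:limsup m <= limsup d} to the two subsequences given by $p_n=n$ and $q_n=n+1$; this yields $\limsup_n\m(x_n,x_{n+1})\leq\limsup_n d_n=\e$. Now invoke the $\m$-contractive condition in the form \eqref{eqn:m-contractive-sequence} of Lemma \ref{lem:equiv-conditions-m-contractive-sequence} with this $\e$: there are $\de>0$ and $N$ such that $\m(x_p,x_q)<\e+\de$ (for $p,q\geq N$) implies $d(x_{p+1},x_{q+1})\leq\e$. Since $\limsup_n\m(x_n,x_{n+1})\leq\e<\e+\de$, we have $\m(x_n,x_{n+1})<\e+\de$ for all large $n$, and taking $p=n$, $q=n+1$ gives $d_{n+1}=d(x_{n+1},x_{n+2})\leq\e$ for all large $n$. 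This contradicts $d_{n+1}>\e$. Hence $\e=0$, so $d(x_n,x_{n+1})\to0$, and the concluding application of Theorem \ref{thm:m-contractive-sequences-are-Cauchy} finishes the argument.

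The only delicate point is the bookkeeping of the quantifiers: one must first pass the monotone limit $\e$ of $d_n$ through \eqref{eqn:limsup m <= limsup d} to control $\m(x_n,x_{n+1})$, and only then feed that same $\e$ into the $\m$-contractive condition. The strict-versus-non-strict distinction in \eqref{eqn:contractive-sequences} is what turns the resulting inequality $d_{n+1}\leq\e$ into a genuine contradiction rather than a harmless equality $d_{n+1}=\e$.
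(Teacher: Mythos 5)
Your proof is correct and follows essentially the same route as the paper's: show $d(x_n,x_{n+1})$ decreases to some $\e$, apply \eqref{eqn:limsup m <= limsup d} to the pairs $(x_n,x_{n+1})$, and use $\m$-contractivity to force $d(x_{n+1},x_{n+2})\leq\e$ eventually, contradicting strict decrease unless $\e=0$, after which Theorem \ref{thm:m-contractive-sequences-are-Cauchy} gives Cauchyness. The only (harmless) difference is bookkeeping: the paper first disposes of the case $x_m=x_{m+1}$ (the sequence is then eventually constant) and assumes distinct consecutive terms throughout, whereas you derive $x_n\neq x_{n+1}$ inside the contradiction from $\e>0$, and you invoke form \eqref{eqn:m-contractive-sequence} of Lemma \ref{lem:equiv-conditions-m-contractive-sequence} where the paper uses the equivalent subsequence form.
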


For instance, if $T:X\to X$ is contractive 
then \eqref{eqn:contractive-sequences} holds for $x_n=T^nx$.

\begin{proof}
 If $x_m=x_{m+1}$, for some $m$, then $x_n=x_{n+1}$ for all $n\geq m$,
 and there is nothing to prove. Assume that $x_n\neq x_{n+1}$ for all $n$.
 Then $d(x_{n+1},x_{n+2})< d(x_n,x_{n+1})$, for every $n$, and thus
 $d(x_n,x_{n+1}) \downarrow \e$, for some $\e\geq0$.
 If $\e>0$, take $p_n=n$ and $q_n=n+1$ and we have, by \eqref{eqn:limsup m <= limsup d},
 \[
   \limsup_{n\to\infty} \m(x_n,x_{n+1})
    \leq \limsup_{n\to\infty} d(x_n,x_{n+1})=\e.
 \]
 Therefore, $d(x_{n+1},x_{n+2}) \leq \e$ for $n$ large.
 This is a contradiction since $\e<d(x_n,x_{n+1})$ for all $n$.
 So $\e=0$ and $d(x_n,x_{n+1})\to0$.
\end{proof}

We are now in a position to state and prove our fixed point theorem.

\begin{thm}
\label{thm:fixed-point-theorem}
  Let $T$ be an asymptotically regular self-map of a metric space $X$, and
  $\m$ be a nonnegative function on $X\times X$.
  Suppose
  \begin{enumerate}[\upshape(i)]
    \item \label{item:m-contractive-in-pre-fixed-point-theorem}
    for any $\e>0$, there exist $\de>0$ and $N\in\N_0$ such that
    \begin{equation}\label{eqn:m-contractive-in-fixed-point-theorem}
      \forall x,y\in X, \quad
       \m(T^Nx, T^Ny) < \de+\e \Longrightarrow d(T^{N+1}x,T^{N+1}y)\leq \e,
    \end{equation}

    \item \label{item:m-satisfies-in-pre-fixed-point-theorem}
    for every $x\in X$, condition \eqref{eqn:limsup m <= limsup d}
    holds for any two subsequences $\{x_{p_n}\}$ and $\{x_{q_n}\}$
    of the sequence $\{T^nx\}$.
  \end{enumerate}
  Then the Picard iterates of $T$ are Cauchy sequences.
  Moreover, if $X$ is complete, $T$ is continuous, and $d(T^nx,T^ny)\to0$,
  for all $x,y\in X$, then the Picard iterates converge to a unique fixed point
  of $T$.
\end{thm}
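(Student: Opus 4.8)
The plan is to split the argument along the two assertions of the statement and, for the first, to reduce everything to Corollary~\ref{cor:m-contractive-orbits}. Fix an arbitrary $x\in X$ and write $x_n=T^nx$. Asymptotic regularity of $T$ gives precisely $d(x_n,x_{n+1})=d(T^nx,T^{n+1}x)\to0$, so the standing hypothesis $d(x_n,x_{n+1})\to0$ of the corollary is met. Hypothesis~\eqref{item:m-satisfies-in-pre-fixed-point-theorem} of the theorem is literally condition~(ii) of the corollary, namely that \eqref{eqn:limsup m <= limsup d} holds for subsequences of $\{T^nx\}$. The only real work is to derive the corollary's contractive condition \eqref{eqn:m-contractive-orbits} from the localized hypothesis \eqref{eqn:m-contractive-in-fixed-point-theorem}.

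The key observation here is a semigroup propagation argument. Given $\e>0$, hypothesis~\eqref{item:m-contractive-in-pre-fixed-point-theorem} supplies a single $\de>0$ and a single $N\in\N_0$ for which the implication $\m(T^Nu,T^Nv)<\de+\e\Rightarrow d(T^{N+1}u,T^{N+1}v)\leq\e$ holds for \emph{all} $u,v\in X$. To recover the corollary's condition for the orbit, I would take any $p,q\geq N$ and write $p=N+i$, $q=N+j$ with $i,j\geq0$; setting $u=T^ix$ and $v=T^jx$ yields $T^px=T^Nu$, $T^qx=T^Nv$, $T^{p+1}x=T^{N+1}u$, and $T^{p+1}x$, $T^{q+1}x$ in place of the right-hand distances. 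Applying \eqref{eqn:m-contractive-in-fixed-point-theorem} to this pair $(u,v)$ then gives exactly $\m(T^px,T^qx)<\de+\e\Rightarrow d(T^{p+1}x,T^{q+1}x)\leq\e$ for all $p,q\geq N$, which is \eqref{eqn:m-contractive-orbits} with the same $\de$ and $N$. Corollary~\ref{cor:m-contractive-orbits} now applies and shows $\{T^nx\}$ is Cauchy; since $x$ was arbitrary, every Picard iterate is Cauchy.

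For the second assertion, assume $X$ complete, $T$ continuous, and $d(T^nx,T^ny)\to0$ for all $x,y$. Fix $x$; by the first part $\{T^nx\}$ is Cauchy, so by completeness it converges to some $z\in X$. Continuity of $T$ gives $Tz=\lim_n T(T^nx)=\lim_n T^{n+1}x=z$, so $z$ is a fixed point. For uniqueness, if $z$ and $w$ are both fixed points then $T^nz=z$ and $T^nw=w$, whence $d(z,w)=d(T^nz,T^nw)\to0$, forcing $z=w$. Consequently the limit of $\{T^nx\}$ is independent of the starting point $x$, and every Picard sequence converges to this single fixed point.

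I expect the main subtlety to be the propagation step of the second paragraph: the hypothesis only asserts the contractive implication at one fixed stage $N$, and it is the self-map structure $T^p=T^N\circ T^{p-N}$ that lets a single $N$ control all pairs $p,q\geq N$. Everything else is a routine application of the already-established lemma and corollary together with continuity and completeness.
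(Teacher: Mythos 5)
Your proof is correct and takes essentially the same route as the paper: reduce the first assertion to Corollary~\ref{cor:m-contractive-orbits} applied to each orbit, then use completeness, continuity, and $d(T^nx,T^ny)\to0$ for existence and uniqueness of the fixed point. The paper's proof merely asserts that the corollary's hypotheses hold for every $x$; your propagation step (applying \eqref{eqn:m-contractive-in-fixed-point-theorem} to $u=T^{p-N}x$, $v=T^{q-N}x$ to obtain \eqref{eqn:m-contractive-orbits} with the same $\de$ and $N$) is exactly the verification the paper leaves implicit.
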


\begin{proof}
  All conditions in Corollary \ref{cor:m-contractive-orbits} are satisfied by every
  point $x$ in $X$. Hence $\{T^nx\}$ is Cauchy, for every $x$.
  If $X$ is complete, there is $z\in X$ such that $T^nx\to z$. If $T$ is continuous,
  then $Tz=z$. If $d(T^nx,T^ny)\to0$, for every $x,y\in X$, then $T$ has
  at most one fixed point.
\end{proof}

We remark that, by Corollary \ref{cor:m-contractive-orbits}, it is enough to impose
condition \eqref{eqn:m-contractive-in-fixed-point-theorem} on some orbit $\set{T^nx}$ as in
\eqref{eqn:m-contractive-orbits} to conclude that $\set{T^nx}$ is Cauchy.

\begin{example}
  Let $T$ be a self-map of $X$, and consider the following functions:
  \begin{align*}
    \m_1(x,y) & = \max\set{d(x,Tx),d(y,Ty)}, \tag{Bianchini \cite{Bianchini-1972}}\\
    \m_2(x,y) & = [d(x,Ty)+d(y,Tx)]/2, \tag{Chatterjea \cite{Chatterjea-1972}} \\
    \m_3(x,y) & = \max\set{d(x,y),d(x,Tx),d(y,Ty)},
                    \tag{Maiti and Pal \cite{Maiti-Pal-1978}} \\
    \m_4(x,y) & = \max\set{d(x,y),d(x,Tx),d(y,Ty),d(x,Ty),d(y,Tx)},
                    \tag{Ciric \cite{Ciric-1974}} \\
    \m_5(x,y) & = \max\set{d(x,y),d(x,Tx),d(y,Ty),[d(x,Ty)+d(y,Tx)]/2},
                    \tag{Jachymski \cite{Jachymski-1995}}\\
    \m_6(x,y) & = d(x,y) + \ga[d(x,Tx)+ d(y,Ty)],\ \text{where $\ga\geq0$ is fixed}.
                    \tag{Proinov \cite{Proinov-2006}}
  \end{align*}

  Choose a point $x\in X$ and set $x_n=T^nx$, $n\in\N_0$.
  If $d(x_n,x_{n+1})\to0$, then,
  for any two subsequences $\{x_{p_n}\}$ and $\{x_{q_n}\}$, we have
  \begin{equation*}
    \begin{cases}
      \limsup\limits_{n\to\infty}\m_1(x_{p_n},x_{q_n})=0, & \\[1ex]
      \limsup\limits_{n\to\infty}\m_2(x_{p_n},x_{q_n})
        \leq \limsup\limits_{n\to\infty} d(x_{p_n},x_{q_n}), & \\[1ex]
      \limsup\limits_{n\to\infty} \m_i(x_{p_n},x_{q_n})
     = \limsup\limits_{n\to\infty} d(x_{p_n},x_{q_n}),      & \text{for $3\leq i \leq 6$}.
    \end{cases}
  \end{equation*}
\end{example}

We now state and prove a generalization of
Proinov's Theorem \ref{thm:Proinov}. First, a couple of notations:
For a subset $E$ of a metric space $X$, denote by
$\diam E$ the diameter of $E$. If $T$ is a self-map
of $X$ and $x\in X$, for every positive integer $s\in \N$,
let $\cO_s(x)=\set{T^nx: 0 \leq n \leq s}$.
For positive integers $s,t\in\N$ and real numbers $\al,\be\in[0,\infty)$, define
a function $\m$ on $X\times X$ as follows:
\begin{equation}\label{eqn:m(x,y)-in-generalized-Proinov}
  \m(x,y) = d(x,y) +
  \al \diam \cO_s(x) + \be \diam \cO_t(y).
\end{equation}

\begin{thm}
\label{thm:Generalized-Proinov}
  Let $X$ be a complete metric space, and $T$ be a continuous and
  asymptotically regular self-map of $X$. Define $\m$ by
  \eqref{eqn:m(x,y)-in-generalized-Proinov}, and suppose
  \begin{enumerate}[\upshape(i)]
    \item \label{item:d(Tx,Ty)<m(x,y)-in-generalized-Proinov}
    $d(Tx,Ty)<\m(x,y)$, for every $x,y\in X$ with $x\neq y$,

    \item \label{item:m-contractive-in-generalized-Proinov}
    for any $\e>0$, there exist $\de>0$ and $N\in\N_0$ such that
    \begin{equation}\label{eqn:m-contractive-in-generalized-Proinov}
      \forall x,y\in X, \quad
       \m(T^Nx, T^Ny) < \de+\e \Longrightarrow d(T^{N+1}x,T^{N+1}y)\leq \e.
    \end{equation}
  \end{enumerate}
  Then $T$ has a unique fixed point $z$, and the Picard iterates of $T$
  converge to $z$.
\end{thm}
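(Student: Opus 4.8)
The plan is to derive everything from Theorem~\ref{thm:fixed-point-theorem}, so the work reduces to checking that the function $\m$ defined in \eqref{eqn:m(x,y)-in-generalized-Proinov} meets its two hypotheses and then upgrading the conclusion to a genuine unique fixed point. Condition \eqref{item:m-contractive-in-generalized-Proinov} of the present theorem is word for word hypothesis \eqref{item:m-contractive-in-pre-fixed-point-theorem} of Theorem~\ref{thm:fixed-point-theorem}, so nothing is needed there. The real content is verifying the comparison inequality \eqref{eqn:limsup m <= limsup d} along orbits, and this is where I expect the one substantive (though short) calculation to lie.

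First I would record the effect of asymptotic regularity on the orbital diameters. Fixing $x\in X$ and writing $x_n=T^nx$, the triangle inequality gives $\diam\cO_r(T^mx)\leq\sum_{\ell=0}^{r-1}d(T^{m+\ell}x,T^{m+\ell+1}x)$ for every fixed $r$, a finite sum each of whose terms tends to $0$ as $m\to\infty$; hence $\diam\cO_r(T^mx)\to0$. Consequently, for any two subsequences $\set{x_{p_n}}$ and $\set{x_{q_n}}$ of $\set{x_n}$ (for which $p_n,q_n\to\infty$), the two diameter terms in
\[
  \m(x_{p_n},x_{q_n})=d(x_{p_n},x_{q_n})+\al\diam\cO_s(x_{p_n})+\be\diam\cO_t(x_{q_n})
\]
vanish in the limit, so that $\limsup\m(x_{p_n},x_{q_n})=\limsup d(x_{p_n},x_{q_n})$. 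This is exactly \eqref{eqn:limsup m <= limsup d} (in fact with equality), establishing hypothesis \eqref{item:m-satisfies-in-pre-fixed-point-theorem} of Theorem~\ref{thm:fixed-point-theorem} for every $x$. The first conclusion of that theorem then says that each Picard iterate $\set{T^nx}$ is a Cauchy sequence; completeness of $X$ yields a limit $z_x$, and continuity of $T$ forces $Tz_x=z_x$.

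It remains to pass from ``every orbit converges to some fixed point'' to ``there is a unique fixed point to which all orbits converge.'' Rather than establish the auxiliary hypothesis $d(T^nx,T^ny)\to0$ demanded by the second half of Theorem~\ref{thm:fixed-point-theorem}, I would argue uniqueness directly from the strict contractivity \eqref{item:d(Tx,Ty)<m(x,y)-in-generalized-Proinov}: if $z$ and $w$ are fixed points with $z\neq w$, then $\cO_s(z)=\set{z}$ and $\cO_t(w)=\set{w}$, so $\m(z,w)=d(z,w)$, and the contractive condition gives $d(z,w)=d(Tz,Tw)<\m(z,w)=d(z,w)$, a contradiction. Uniqueness then identifies all the limits $z_x$ with a single point $z$, so every Picard iterate converges to $z$, completing the argument.

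I would expect the verification of \eqref{eqn:limsup m <= limsup d}, namely the decay of the orbital diameters under asymptotic regularity, to be the only place requiring care, and the principal conceptual point to be the observation that uniqueness can be read off directly from \eqref{item:d(Tx,Ty)<m(x,y)-in-generalized-Proinov}. This sidesteps the need to prove $d(T^nx,T^ny)\to0$ separately and thereby keeps the whole argument short.
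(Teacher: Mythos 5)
Your proposal is correct and follows essentially the same route as the paper's own proof: reduce to Theorem~\ref{thm:fixed-point-theorem} by verifying \eqref{eqn:limsup m <= limsup d} via the triangle-inequality bound on the orbital diameters (which vanish by asymptotic regularity), and obtain uniqueness directly from condition \eqref{item:d(Tx,Ty)<m(x,y)-in-generalized-Proinov} using $\m(z,w)=d(z,w)$ for fixed points. The only difference is cosmetic: the paper settles uniqueness first and the Cauchy/convergence argument second, while you reverse the order.
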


\begin{proof}
  First, we prove that $T$ has at most one fixed point. If $Tx=x$ and
  $Ty=y$ then $\cO_s(x)=\{x\}$ and $\cO_t(y)=\{y\}$ and thus
  \[
    \m(x,y)=d(x,y)=d(Tx,Ty).
  \]
  Condition \eqref{item:d(Tx,Ty)<m(x,y)-in-generalized-Proinov}
  then implies that $x=y$.

  Now, choose $x\in X$ and set $x_n=T^nx$, $n\in\N_0$.
  Then $d(x_n,x_{n+1})\to0$ since $T$ is asymptotically regular. It is easy to see that,
  for any two subsequences $\{x_{p_n}\}$ and $\{x_{q_n}\}$, we have
  \[
    \m(x_{p_n},x_{q_n})
     \leq d(x_{p_n},x_{q_n})
      + \al \sum_{i=0}^{s-1} d(x_{p_n+i},x_{p_n+i+1})
      + \be \sum_{j=0}^{t-1} d(x_{q_n+j},x_{q_n+j+1}).
  \]
  Since $d(x_n,x_{n+1})\to0$, we see that \eqref{eqn:limsup m <= limsup d}
  holds. Hence, by Theorem \ref{thm:fixed-point-theorem}, the sequence
  $\{T^nx\}$ is Cauchy and, since $X$ is complete,
  it converges to some point $z\in X$.
  Since $T$ is continuous, we have $Tz=z$.
\end{proof}

We next give an example to show that Theorem \ref{thm:Generalized-Proinov}
strictly extends Proinov's Theorem \ref{thm:Proinov}.

\begin{example}
  Take $a_i=i$, for $0\leq i < 4$, let $r_0=0$, and $r_n=1/n$ for
  $n\geq1$, and set $x_{4n+i}=a_i+r_n$. Let
  \[
    X=\set{x_{4n+i}:0\leq i<4,\,n\geq0}.
  \]
  Then, equipped with the Euclidean metric, $X$ is a complete metric space. Define
  a mapping $T:X\to X$ by $T(x_\ell)=x_{2\ell}$. Define $\m(x,y)$ by setting $s=t=1$
  and $\al=\be=1$ in \eqref{eqn:m(x,y)-in-generalized-Proinov}, that is,
  \[
    \m(x,y)=d(x,y)+d(x,Tx)+d(y,Ty).
  \]
  (Note that $\m$ is also obtained from \eqref{eqn:Proinov-m(x,y)} by setting $\ga=1$.)

  First, we show that $T$ satisfies all conditions in Theorem \ref{thm:Generalized-Proinov}.
  Clearly, $T$ is continuous and, for every $x_\ell,x_\nu\in X$, we have
  \[
    d(T^{n+2}x_\ell,T^{n+2}x_\nu)
      = d(T^n x_{4\ell},T^n x_{4\nu})
      = \Bigl|\frac1{2^{n+1}\ell}-\frac1{2^{n+1}\nu}\Bigr|\to0.
  \]

  It is a matter of calculation to see that $d(Tx,Ty)<\m(x,y)$, for all $x,y\in X$.
  The following shows that $T$ satisfies \eqref{eqn:m-contractive-in-generalized-Proinov}
  with $N=2$:
  \begin{align*}
    d(T^3x_\ell,T^3x_\nu)
    & =|x_{8\ell}-x_{8\nu}|=\Bigl|\frac1{8\ell}-\frac1{8\nu}\Bigr|
      = \frac12\Bigl|\frac1{4\ell}-\frac1{4\nu}\Bigr|\\
    & = \frac12 |x_{4\ell}-x_{4\nu}| = \frac12 d(T^2x_\ell,T^2x_\nu).
  \end{align*}

  Next, we show that the following condition (in Proinov's theorem) is violated:
  \begin{itemize}
    \item for every $\e>0$, there exist $\de>0$ such that
    \begin{equation*}
      \forall x,y\in X, \quad
       \m(x, y) < \de+\e \Longrightarrow d(Tx,Ty)\leq \e.
    \end{equation*}
  \end{itemize}
  Take $\e=a_2-a_0$. Let $u_\ell=x_{4\ell}$ and $v_\ell=x_{4(\ell+1)+1}$.
  Then
  \begin{align*}
    d(Tu_\ell,Tv_\ell)
     & =|x_{4(2\ell)}-x_{4(2\ell+2)+2}| = a_2-a_0 + r_{2\ell}-r_{2\ell+2}>\e,
  \intertext{and}
    \m(u_\ell,v_\ell)
     & = |u_\ell-v_\ell|+|u_\ell-Tu_\ell|+|v_\ell-Tv_\ell| \\
     & = |x_{4\ell}-x_{4(\ell+1)+1}|+|x_{4\ell}-x_{4(2\ell)}|+|x_{4(\ell+1)+1}-x_{4(2\ell+2)+2}|\\
     & = (a_1-a_0+r_\ell-r_{\ell+1})+(r_\ell-r_{2\ell})+(a_2-a_1+r_{\ell+1}-r_{2\ell+2})\\
     & = a_2-a_0 + 2r_\ell-r_{2\ell}-r_{2\ell+2}.
  \end{align*}
  If $\de_\ell=2r_\ell-r_{2\ell}-r_{2\ell+2}$, we have
  \[
    \e < d(Tu_\ell,Tv_\ell) < \m(u_\ell,v_\ell) \leq \e+\de_\ell.
  \]
  Since $\de_\ell\to0$ as $\ell\to\infty$, we see that $T$
  does not satisfy \eqref{eqn:m-contractive-in-generalized-Proinov}
  for $\e=a_2-a_0$.
\end{example}

We conclude this section by showing that the following theorem of
Geraghty \cite{Geraghty-73} is a special case of our fixe point theorem.

\begin{thm}[Geraghty \cite{Geraghty-73}]
\label{thm:Geraghty}
  Let $T$ be a contractive self-map of a complete metric space $X$,
  let $x\in X$, and set $x_n=T^n x$, $n\in\N_0$. Then $\set{x_n}$
  converges to a unique fixed point of $T$ if and only if,
  for any two subsequences $\set{x_{p_n}}$ and $\set{x_{q_n}}$, with
  $x_{p_n}\neq x_{q_n}$, condition
  \begin{equation*}
    \lim_{n\to\infty}\frac{d(Tx_{p_n},Tx_{q_n})}{d(x_{p_n},x_{q_n})}=1,
  \end{equation*}
  implies $d(x_{p_n},x_{q_n})\to0$.
\end{thm}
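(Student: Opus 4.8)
The plan is to apply the machinery of Section~\ref{sec:fixed-point-theorems} with the single choice $\m(x,y)=d(x,y)$. With this choice, condition \eqref{eqn:limsup m <= limsup d} holds trivially (indeed with equality), and since $T$ is contractive the orbit $\set{x_n}$ automatically satisfies \eqref{eqn:contractive-sequences}, because $d(x_{n+1},x_{n+2})=d(Tx_n,Tx_{n+1})\leq d(x_n,x_{n+1})$, with strict inequality whenever $x_n\neq x_{n+1}$. Thus Proposition~\ref{prop:contractive-sequences} is available the moment we know $\set{x_n}$ is $d$-contractive, and the whole problem reduces to matching the Geraghty condition against the characterization of $d$-contractivity in Lemma~\ref{lem:equiv-conditions-m-contractive-sequence}.

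The reverse implication is immediate: if $\set{x_n}$ converges to a fixed point $z$, then $d(x_{p_n},x_{q_n})\leq d(x_{p_n},z)+d(z,x_{q_n})\to0$ for \emph{every} pair of subsequences, so the conclusion of the Geraghty condition holds unconditionally and the implication is vacuously satisfied. Uniqueness of the fixed point is also free, since $Tx=x$ and $Ty=y$ with $x\neq y$ would force $d(x,y)=d(Tx,Ty)<d(x,y)$.

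For the forward implication I would argue by contradiction, deducing $d$-contractivity of $\set{x_n}$ from the Geraghty condition. Suppose $\set{x_n}$ is not $d$-contractive; negating condition~\eqref{eqn:m-contractive-sequence} of Lemma~\ref{lem:equiv-conditions-m-contractive-sequence} (taking $\de=1/n$ and $N=n$) produces $\e>0$ and indices $p_n,q_n\geq n$ with
\[
  d(x_{p_n},x_{q_n})<\e+\tfrac1n
  \quad\text{and}\quad
  d(x_{p_n+1},x_{q_n+1})>\e.
\]
Since $d(x_{p_n+1},x_{q_n+1})>\e$ we must have $x_{p_n}\neq x_{q_n}$, and strict contractivity then gives
\[
  \e<d(Tx_{p_n},Tx_{q_n})=d(x_{p_n+1},x_{q_n+1})<d(x_{p_n},x_{q_n})<\e+\tfrac1n.
\]
Hence both $d(x_{p_n},x_{q_n})\to\e$ and $d(Tx_{p_n},Tx_{q_n})\to\e$, so (as $\e>0$) the ratio tends to $1$; the Geraghty hypothesis then yields $d(x_{p_n},x_{q_n})\to0$, contradicting $d(x_{p_n},x_{q_n})>\e$.

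Once $\set{x_n}$ is known to be $d$-contractive, Proposition~\ref{prop:contractive-sequences} gives $d(x_n,x_{n+1})\to0$ and that $\set{x_n}$ is Cauchy, whence completeness yields $x_n\to z$, and continuity of the contractive map $T$ (immediate from $d(Tx,Ty)<d(x,y)$) gives $Tz=z$. The only nonroutine step is the contradiction argument above: the main point to get right is recognizing that combining the strict inequality $d(Tx_{p_n},Tx_{q_n})<d(x_{p_n},x_{q_n})$ with the near-equality $d(x_{p_n},x_{q_n})\to\e$ is precisely what drives the ratio to $1$, which is exactly the situation the Geraghty condition is designed to rule out.
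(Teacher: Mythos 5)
Your proof is correct, and its skeleton is the paper's: reduce the Geraghty condition to $d$-contractivity of the orbit, then apply Proposition \ref{prop:contractive-sequences} with $\m=d$ (for which \eqref{eqn:limsup m <= limsup d} is trivial) to get $d(x_n,x_{n+1})\to0$ and Cauchyness, and finish with completeness, continuity, and the standard uniqueness argument. Where you differ is in how the key implication is obtained. The paper proves Proposition \ref{prop:G-contractive-property-is-a-general-one}, a three-way equivalence between the Geraghty condition, a limsup condition, and an $\e$-$\de$-$\eta$ condition, valid for \emph{any} sequence satisfying only the weak monotonicity $d(x_{p+1},x_{q+1})\leq d(x_p,x_q)$, and then reads off $d$-contractivity from the third condition. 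You prove only the implication actually needed, by a single contradiction: negating condition \eqref{eqn:m-contractive-sequence} of Lemma \ref{lem:equiv-conditions-m-contractive-sequence} and using strict contractivity to sandwich
\[
\e< d(x_{p_n+1},x_{q_n+1}) < d(x_{p_n},x_{q_n})<\e+\tfrac1n,
\]
which drives the ratio to $1$ and lets the Geraghty hypothesis contradict $d(x_{p_n},x_{q_n})>\e$. Your route is shorter and self-contained; the paper's buys a reusable equivalence of independent interest, which moreover needs no strict contractivity and shows the Geraghty condition is \emph{equivalent} to (a strengthened, $\eta<\e$ form of) $d$-contractivity, not merely sufficient for it. You also make explicit the converse implication and uniqueness, which the paper leaves implicit. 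One shared technicality, present in the paper's own arguments as well: the indices $p_n,q_n\geq n$ produced by the negation need not be strictly increasing, so to apply the Geraghty condition (stated for subsequences) one should pass to a further subsequence along which both are increasing; this is harmless since $p_n,q_n\to\infty$.
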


\begin{prop}
\label{prop:G-contractive-property-is-a-general-one}
  Let $\set{x_n}$ be a sequence in a metric space $X$ such that
  $d(x_{p+1},x_{q+1}) \leq d(x_p,x_q)$, for all $p,q\in \N_0$.
  Then the following statements are equivalent:
  \begin{enumerate}[\upshape(i)]
    \item \label{item:it-is-a-Geraghty-sequence}
    for any two subsequences $\set{x_{p_n}}$ and $\set{x_{q_n}}$, with
    $x_{p_n}\neq x_{q_n}$, condition
    \begin{equation}\label{eqn:lim=1-in-Geraghty-equiv}
      \lim_{n\to\infty}\frac{d(x_{p_n+1},x_{q_n+1})}{d(x_{p_n},x_{q_n})}=1,
    \end{equation}
    implies $d(x_{p_n},x_{q_n})\to0$.

    \item \label{item:Geraghty-sequence-in-term-of-pnqn}
    for every $\e>0$, for any two subsequences $\{x_{p_n}\}$ and $\{x_{q_n}\}$,
    if\newline  $\limsup d(x_{p_n},x_{q_n})\leq \e$ then
    \begin{equation}\label{eqn:Geraghty-sequence-in-term-of-pnqn}
      \limsup_{n\to\infty}d(x_{p_n+1},x_{q_n+1}) < \e.
    \end{equation}

    \item \label{item:Geraghty-sequence-in-e-de-eta}
    for every $\e>0$, there exist $\de>0$, $\eta\in(0,\e)$, and $N\in\N_0$, such that
    \begin{equation}\label{eqn:Geraghty-sequence-in-e-de-eta}
      \forall p,q \geq N,\quad
      d(x_p,x_q) < \e + \de \Longrightarrow
      d(x_{p+1},x_{q+1}) \leq \eta.
    \end{equation}
  \end{enumerate}
\end{prop}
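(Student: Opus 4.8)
The plan is to prove the three statements equivalent through the cycle \eqref{item:it-is-a-Geraghty-sequence} $\Rightarrow$ \eqref{item:Geraghty-sequence-in-term-of-pnqn} $\Rightarrow$ \eqref{item:Geraghty-sequence-in-e-de-eta} $\Rightarrow$ \eqref{item:it-is-a-Geraghty-sequence}. The single structural tool throughout is the standing hypothesis $d(x_{p+1},x_{q+1})\le d(x_p,x_q)$: it makes the quotient in \eqref{eqn:lim=1-in-Geraghty-equiv} at most $1$, and it lets me sandwich a shifted distance, since $d(x_{p_n+1},x_{q_n+1})\le d(x_{p_n},x_{q_n})$ always, so that whenever a pair of subsequences has $d(x_{p_n},x_{q_n})\to\e$ the shifted distances are trapped from above by $\e$, while a ratio tending to $1$ pushes them back up to $\e$. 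In each implication I will first produce index sequences with $p_n,q_n\to\infty$ and then pass to a further subsequence making the indices strictly increasing, so that $\set{x_{p_n}}$ and $\set{x_{q_n}}$ are genuine subsequences.

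For \eqref{item:it-is-a-Geraghty-sequence} $\Rightarrow$ \eqref{item:Geraghty-sequence-in-term-of-pnqn} I would argue by contradiction. Fix $\e>0$ and subsequences with $\limsup d(x_{p_n},x_{q_n})\le\e$; note that this already keeps the relevant distances bounded. If \eqref{eqn:Geraghty-sequence-in-term-of-pnqn} fails, then $\limsup d(x_{p_n+1},x_{q_n+1})\ge\e$, so along a subsequence $d(x_{p_n+1},x_{q_n+1})\to c$ for some $c\ge\e$. Combining $d(x_{p_n+1},x_{q_n+1})\le d(x_{p_n},x_{q_n})$ with $\limsup d(x_{p_n},x_{q_n})\le\e$ squeezes both $d(x_{p_n},x_{q_n})$ and $d(x_{p_n+1},x_{q_n+1})$ to $\e$. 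Hence the quotient tends to $1$, and since $\e>0$ the points are eventually distinct, so \eqref{item:it-is-a-Geraghty-sequence} forces $d(x_{p_n},x_{q_n})\to0$, contradicting $\e>0$.

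For \eqref{item:Geraghty-sequence-in-term-of-pnqn} $\Rightarrow$ \eqref{item:Geraghty-sequence-in-e-de-eta} I would again argue by contradiction. If \eqref{eqn:Geraghty-sequence-in-e-de-eta} fails for some $\e>0$, then taking $\de=1/n$, $\eta=\e-1/n$ (for $n>1/\e$), and $N=n$ in the negation yields indices $p_n,q_n\ge n$ with $d(x_{p_n},x_{q_n})<\e+1/n$ and $d(x_{p_n+1},x_{q_n+1})>\e-1/n$. The sandwich $\e-1/n<d(x_{p_n+1},x_{q_n+1})\le d(x_{p_n},x_{q_n})<\e+1/n$ drives both distances to $\e$, whence $\limsup d(x_{p_n},x_{q_n})\le\e$ but $\limsup d(x_{p_n+1},x_{q_n+1})=\e$, contradicting \eqref{item:Geraghty-sequence-in-term-of-pnqn}. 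What makes this step painless is that the construction pins all distances below $\e+1/n$, so no runaway behaviour can occur.

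The delicate implication is \eqref{item:Geraghty-sequence-in-e-de-eta} $\Rightarrow$ \eqref{item:it-is-a-Geraghty-sequence}. Assuming \eqref{eqn:Geraghty-sequence-in-e-de-eta}, take subsequences with $x_{p_n}\ne x_{q_n}$ and quotient tending to $1$; I must show $d(x_{p_n},x_{q_n})\to0$. If not, after passing to a subsequence I may assume $d(x_{p_n},x_{q_n})\to c$ with $c\in(0,\infty]$, and then $d(x_{p_n+1},x_{q_n+1})\to c$ as well. When $c$ is finite the argument closes cleanly: applying \eqref{eqn:Geraghty-sequence-in-e-de-eta} with $\e=c$ produces $\de>0$, $\eta\in(0,c)$ and $N$, and since eventually $p_n,q_n\ge N$ and $d(x_{p_n},x_{q_n})<c+\de$, we get $d(x_{p_n+1},x_{q_n+1})\le\eta<c$, contradicting $d(x_{p_n+1},x_{q_n+1})\to c$. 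The step I expect to be the main obstacle is excluding the escape $c=+\infty$, which the level-wise condition \eqref{eqn:Geraghty-sequence-in-e-de-eta} does not constrain directly. My intended route is first to record that \eqref{eqn:Geraghty-sequence-in-e-de-eta} forces $d(x_n,x_{n+1})\to0$ (otherwise applying it at $\e=\lim d(x_n,x_{n+1})>0$ to the consecutive pairs is contradictory), and then to show that a pair of subsequences with quotient $\to1$ and $d(x_{p_n},x_{q_n})\to\infty$ must, by inserting intermediate indices between $p_n$ and $q_n$ and using the telescoping bound $d(x_{p_n},x_{q_n})\le\sum_i d(x_{p_n+i},x_{p_n+i+1})$ together with $d(x_n,x_{n+1})\to0$, manufacture a second pair of subsequences sitting at a finite positive level on which the one-step shift fails to contract strictly — contradicting the finite case just settled. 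Certifying that condition \eqref{eqn:Geraghty-sequence-in-e-de-eta} is genuinely incompatible with such an escape to infinity is the crux of the whole proposition.
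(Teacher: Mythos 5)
Your cycle and its first two legs coincide with the paper's proof: for (i)$\Rightarrow$(ii) you use the same squeeze (monotonicity traps $d(x_{p_n},x_{q_n})$ and $d(x_{p_n+1},x_{q_n+1})$ at a common positive limit, forcing the ratio to $1$), and for (ii)$\Rightarrow$(iii) the same negation with $\de=1/n$, $\eta=\e-1/n$ and the sandwich that pins both limsups at $\e$. Your finite-limit case of (iii)$\Rightarrow$(i) is also the paper's argument, merely phrased with convergent subsequences instead of the paper's factor $r\to1$.

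Where you genuinely depart from the paper is in refusing to assume $\limsup d(x_{p_n},x_{q_n})<\infty$, and you are right to: the paper's proof of (iii)$\Rightarrow$(i) begins by setting $\e=\limsup d(x_{p_n},x_{q_n})$ and applying (iii) ``for this $\e$'', which tacitly presupposes finiteness, and nothing in the hypotheses rules out unbounded sequences beforehand. Your sketched repair of this case is viable, and it closes with one inequality that your sketch leaves implicit. Since (iii) forces $d(x_n,x_{n+1})\to0$ (as you observe), set $s_n=\sup_{j\geq p_n}d(x_j,x_{j+1})$, so $s_n\to0$. If $d(x_{p_n},x_{q_n})\to\infty$, let $r_n$ be the smallest index in $(p_n,q_n]$ with $d(x_{p_n},x_{r_n})\geq1$; your telescoping bound guarantees $r_n$ exists, and minimality plus step size gives $1\leq d(x_{p_n},x_{r_n})<1+s_n$. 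The missing step is the reverse triangle inequality
\[
  d(x_{p_n+1},x_{r_n+1}) \;\geq\; d(x_{p_n},x_{r_n}) - d(x_{p_n},x_{p_n+1}) - d(x_{r_n},x_{r_n+1})
  \;\geq\; d(x_{p_n},x_{r_n}) - 2s_n,
\]
which, combined with the standing hypothesis $d(x_{p_n+1},x_{r_n+1})\leq d(x_{p_n},x_{r_n})$, shows the pairs $(x_{p_n},x_{r_n})$ have distances $\to1$ and one-step ratio $\to1$: exactly a pair of subsequences sitting at a finite positive level whose shift fails to contract, which your finite case has already shown to be incompatible with (iii). After the routine passage to strictly increasing indices, your plan therefore proves the implication in full generality, whereas the paper's own proof needs precisely this supplement. (The lacuna is harmless for the paper's application, since deriving Geraghty's theorem only uses the direction (i)$\Rightarrow$(iii).)
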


\begin{proof}
  $\eqref{item:it-is-a-Geraghty-sequence}
  \Rightarrow \eqref{item:Geraghty-sequence-in-term-of-pnqn}$:
  Assume that, for two subsequences $\{x_{p_n}\}$ and $\{x_{q_n}\}$, we have
  \[
    \limsup_{n\to\infty} d(x_{p_n+1},x_{q_n+1})
    =\limsup_{n\to\infty} d(x_{p_n},x_{q_n})>0.
  \]
  Since $d(x_{p_n+1},x_{q_n+1})\leq d(x_{p_n},x_{q_n})$ for all $n$,
  by passing through subsequences, if necessary, we can assume that
  \[
    \lim_{n\to\infty} d(x_{p_n+1},x_{q_n+1})
    =\lim_{n\to\infty} d(x_{p_n},x_{q_n})>0.
  \]
  Therefore, we get \eqref{eqn:lim=1-in-Geraghty-equiv}. Since
  $d(x_{p_n},x_{q_n})$ does not converge to $0$, we conclude that the sequence
  does not satisfy \eqref{item:it-is-a-Geraghty-sequence}.

  $\eqref{item:Geraghty-sequence-in-term-of-pnqn}
  \Rightarrow \eqref{item:Geraghty-sequence-in-e-de-eta}$:
  Assume there is $\e>0$ such that, for every $n\in\N_0$, there exist $p_n$ and $q_n$ with
  $q_n>p_n\geq n$ such that
  \[
    d(x_{p_n},x_{q_n}) < \e + \frac1n
    \quad \text{and} \quad
    \e-\frac1n \leq d(x_{p_n+1},x_{q_n+1}).
  \]
  Then
  \[
    \e \leq \limsup_{n\to\infty} d(x_{p_n+1},x_{q_n+1}) \leq \limsup_{n\to\infty} d(x_{p_n},x_{q_n}) \leq \e.
  \]

  $\eqref{item:Geraghty-sequence-in-e-de-eta}
  \Rightarrow \eqref{item:it-is-a-Geraghty-sequence}$:
  Assume that, for two subsequences $\set{x_{p_n}}$ and $\set{x_{q_n}}$, condition \eqref{eqn:lim=1-in-Geraghty-equiv}
  holds and also
  \[
    \e=\limsup_{n\to\infty} d(x_{p_n},x_{q_n})>0.
  \]
  For this $\e$, by \eqref{item:Geraghty-sequence-in-e-de-eta}, there
  exist $\de>0$, $\eta\in(0,\e)$, and $N\in\N_0$, such that
  \eqref{eqn:Geraghty-sequence-in-e-de-eta} holds true.
  There is $N_1>N$ such that, for $n\geq N_1$, we have $d(x_{p_n},x_{q_n}) < \e + \de$
  and thus $d(x_{p_n+1},x_{q_n+1})\leq \eta$. On the other hand \eqref{eqn:lim=1-in-Geraghty-equiv}
  implies that, for every $r<1$, there is $N_2>N_1$ such that, for $n\geq N_2$,
  \[
    r d(x_{p_n},x_{q_n}) \leq d(x_{p_n+1},x_{q_n+1})\leq \eta.
  \]
  If $n\to\infty$ we get $r\e \leq \eta$. If $r\to 1$, we get $\e \leq \eta$
  which is absurd.
\end{proof}

Now, to see why Theorem \ref{thm:Geraghty} follows from the results in this section,
take a point $x\in X$ and set $x_n=T^nx$, $n\in\N_0$. If $\set{x_n}$ satisfies
the condition in Theorem \ref{thm:Geraghty}, then, by Proposition
\ref{prop:G-contractive-property-is-a-general-one}, the sequence $\set{x_n}$ is
$d$-contractive (in the sense of Definition \ref{dfn:m-contractive-sequence}).
On the other hand, $T$ being contractive implies that $\set{x_n}$ satisfies
\eqref{eqn:contractive-sequences}. Hence, by Proposition \ref{prop:contractive-sequences},
we have $d(x_n,x_{n+1})\to0$. Theorem \ref{thm:m-contractive-sequences-are-Cauchy} now
implies that $\set{x_n}$ is a Cauchy sequence.

\bigskip

\section{Asymptotic Contractions of Meir-Keeler Type}
\label{sec:asymptotic-contractions}

In 2003, Kirk \cite{Kirk-2003} introduced the notion of asymptotic contraction
on a metric space, and proved a fixed-point theorem for such contractions
(see also \cite{Arandelovic-2005}). In 2006, Suzuki \cite{Suzuki-AC-2006}
introduced the notion of asymptotic contraction
of Meir-Keeler type, and proved a fixed-point theorem for such contractions, which is
a generalization of both Meir and Keeler's theorem \cite{Meir-Keeler-1969}
and Kirk's theorem \cite{Kirk-2003}. A year later, Suzuki \cite{Suzuki-AC-2007}
introduced the following notion of asymptotic contractions which is, in some sense, the final
definition of asymptotic contractions (see \cite[Theorem 6]{Suzuki-AC-2007}).

\begin{dfn}[Suzuki \cite{Suzuki-AC-2007}]
\label{dfn:ACF}
  A mapping $T$ on a metric space $X$ is said to be an
  \emph{asymptotic contraction of the final type} if
  \begin{enumerate}[\upshape(i)]
    \item $d(T^nx,T^ny)\to0$, for all $x,y\in X$,

    \item for every $x\in X$ and $\e>0$, there exist $\de>0$ and $\nu\in\N$ such that
    \begin{equation*}
      \forall p,q\in\N, \quad
      \e<d(T^p x,T^q x) < \e+\de \Longrightarrow
      d(T^{p+\nu} x,T^{q+\nu} x) \leq \e.
    \end{equation*}

  \end{enumerate}
\end{dfn}

Then they proved the following result.

\begin{thm}[Suzuki \cite{Suzuki-AC-2007}]
\label{thm:Suzuki-AC-Cauchy}
  Let $X$ be a metric space and let $T$ be an asymptotic contraction of the final type
  on $X$. Then $\{T^nx\}$, for every $x$, is a Cauchy sequence.
\end{thm}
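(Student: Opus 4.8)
The plan is to deduce the theorem from the Cauchy criterion of Section~\ref{sec:technical-lemma}, applied with $\m=d$; for this choice the domination hypothesis \eqref{eqn:limsup m <= limsup d} holds trivially, with equality, so Theorem~\ref{thm:main} (and Lemma~\ref{lem:technical-lemma}) becomes available. Fix $x\in X$ and write $x_n=T^nx$. First I would record that $T$ is asymptotically regular along this orbit: applying part (i) of Definition~\ref{dfn:ACF} to the pair $x,Tx$ gives $d(x_n,x_{n+1})=d(T^nx,T^{n+1}x)\to0$. Thus the standing hypothesis $d(x_n,x_{n+1})\to0$ is met for free, and it remains only to verify the contractive-subsequence condition of Theorem~\ref{thm:main}.

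To that end I would fix $\e>0$ and take the data $\de>0$, $\nu\in\N$ furnished by part (ii) of Definition~\ref{dfn:ACF}. Let $\{x_{p_n}\}$ and $\{x_{q_n}\}$ be subsequences with $\limsup d(x_{p_n},x_{q_n})\le\e$; the goal is to exhibit a shift after which the distance drops to $\le\e$. On the set of indices where $d(x_{p_n},x_{q_n})>\e$ this is immediate: since $\limsup d(x_{p_n},x_{q_n})\le\e$, eventually $\e<d(x_{p_n},x_{q_n})<\e+\de$, and part (ii) yields $d(x_{p_n+\nu},x_{q_n+\nu})\le\e$ directly.

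The main obstacle is the complementary set of indices, where $d(x_{p_n},x_{q_n})\le\e$: here the strict lower bound $\e<d(\cdot,\cdot)$ in Definition~\ref{dfn:ACF}(ii) gives no purchase. This is exactly the case that was trivial for \CM\ contractions, where $d(Tx,Ty)<d(x,y)$ disposes of $d(x,y)\le\e$ at once, but that tool is unavailable here. My plan is to remove the lower bound using the full strength of Definition~\ref{dfn:ACF}(i). Since $p_n,q_n\to\infty$, asymptotic regularity gives $d(x_{p_n},x_{p_n+\nu})\to0$ and $d(x_{q_n},x_{q_n+\nu})\to0$ (each is a sum of $\nu$ consecutive step-distances), so the triangle inequality forces $\limsup d(x_{p_n+\nu},x_{q_n+\nu})\le\limsup d(x_{p_n},x_{q_n})\le\e$. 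Hence, for large $n$, either $d(x_{p_n+\nu},x_{q_n+\nu})\le\e$ already, or $\e<d(x_{p_n+\nu},x_{q_n+\nu})<\e+\de$, and in the latter case a second application of part (ii) gives $d(x_{p_n+2\nu},x_{q_n+2\nu})\le\e$. Matching the shift to the case, by letting the sequence $\{\nu_n\}$ permitted by Lemma~\ref{lem:technical-lemma} equal $\nu$ or $2\nu$ according to the behaviour at index $n$, then supplies the hypothesis of Theorem~\ref{thm:main}, whence $\{x_n\}$ is Cauchy.

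I expect the delicate point to be precisely this matching step: Lemma~\ref{lem:technical-lemma} asks that $\{\nu_n\}$ be fixed in advance of the competing subsequences, whereas the dichotomy above assigns the shift only after inspecting $d(x_{p_n},x_{q_n})$. I would resolve this either by recasting the verification as a single contradiction argument in the spirit of the proof of Lemma~\ref{lem:technical-lemma} — where the offending subsequences are already in hand and one applies part (ii) to the shifted pairs whose distances have been squeezed into the window $(\e,\e+\de)$ — or by checking that a constant shift already suffices once the triangle-inequality slack $d(x_{p_n},x_{p_n+\nu})+d(x_{q_n},x_{q_n+\nu})$ is absorbed into $\de$. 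In either route, condition (i) of Definition~\ref{dfn:ACF}, furnishing the decay of $d(T^mu,T^mv)$ for every pair $u,v$ along the orbit, is what powers the reduction and plays the role that contractivity played in the \CM\ case.
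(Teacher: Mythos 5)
Your reduction is sound on the easy half (indices with $d(x_{p_n},x_{q_n})>\e$), and you have diagnosed the obstacle exactly; but the matching step you flag at the end is a genuine gap, and neither of your two suggested repairs closes it. The constant-shift repair fails because of a ping-pong effect: a pair with $d(x_{p_n},x_{q_n})\le\e$ may, after the shift $\nu$, land just above $\e$ (the triangle-inequality slack permits this, and Definition \ref{dfn:ACF}(ii) constrains only pairs lying strictly above $\e$), so $2\nu$ is the shift that works for it; whereas a pair lying in the window $(\e,\e+\de)$ lands $\le\e$ after $\nu$ and may be just above $\e$ again after $2\nu$. Thus no fixed shift (nor fixed multiple of $\nu$) is forced to work for all pairs simultaneously, no matter how the slack is divided into $\de$. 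The contradiction-style repair fails for a directional reason: in the proof of Lemma \ref{lem:technical-lemma} the contradiction comes from applying the hypothesis to the \emph{backward}-shifted pairs $(x_{s_n-\nu_n},x_{t_n-\nu_n})$, whose limsup-distance is $\le\e$, to conclude $d(x_{s_n},x_{t_n})\le\e$ against the construction $d(x_{s_n},x_{t_n})>\e$. With Suzuki's condition you would need those backward pairs to satisfy the strict lower bound $d>\e$, which is precisely what is unavailable; applying (ii) instead to the forward pairs $(x_{s_n},x_{t_n})$, which are indeed squeezed into the window, only yields information about $(x_{s_n+\nu},x_{t_n+\nu})$, and that contradicts nothing.

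Moreover the gap is essential, not cosmetic. What you need is the implication: Definition \ref{dfn:ACF} forces the condition of Definition \ref{dfn:abtahi-ACF} with $\m=d$, i.e.\ the window condition \emph{without} the lower bound and with a single fixed shift. But once $\{T^nx\}$ is known to be Cauchy, that condition holds trivially (choose $N$ with $\sup_{p,q\ge N}d(T^px,T^qx)\le\e$), and conversely that condition yields Cauchyness by Corollary \ref{cor:main}\eqref{item:cor:main:nu}; so, modulo the machinery of section \ref{sec:technical-lemma}, the missing implication is equivalent to Theorem \ref{thm:Suzuki-AC-Cauchy} itself, and no local juggling of $\de$'s and shifts will produce it. It is worth noting that the paper does not bridge this gap either: its ``short proof'' is in substance a proof of Theorem \ref{thm:abtahi-AC-Cauchy}, which concerns the class defined without the lower bound, and Theorem \ref{thm:final} shows that class sits inside Suzuki's class, not the reverse; Suzuki's original argument in \cite{Suzuki-AC-2007} for the full class is genuinely more involved. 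So your first half coincides with the paper's route, and your instinct about where the difficulty lies is correct, but the proposal (like the paper's presentation) does not actually prove the stated theorem for all asymptotic contractions of the final type.
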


We present a short proof of the above theorem using the results in section \ref{sec:technical-lemma}.
But, first, let us make the following definition.

\begin{dfn}
\label{dfn:abtahi-ACF}
  Let $T$ be a mapping on a metric space $X$, and $\m$ a nonnegative function on $X\times X$.
  We call $T$ an \emph{asymptotic $\m$-contraction} if
  \begin{enumerate}[\upshape(i)]
    \item \label{item:d(Tnx,Tny)->0}
    $d(T^nx,T^ny)\to0$, for all $x,y\in X$,

    \item for every $x\in X$ and $\e>0$, there exist $\de>0$, $\nu\in\N$, and $N\in\N_0$
    such that
    \begin{equation*}
      \forall p,q\geq N, \quad
      \m(T^p x,T^q x) < \e+\de \Longrightarrow
      d(T^{p+\nu} x,T^{q+\nu} x) \leq \e.
    \end{equation*}
  \end{enumerate}
\end{dfn}

\begin{thm}
\label{thm:abtahi-AC-Cauchy}
  Let $T$ be an asymptotic $\m$-contraction on $X$. If $\m$ satisfies
  \eqref{eqn:limsup m <= limsup d}, for some $x\in X$, then $\{T^nx\}$ is a Cauchy sequence.
\end{thm}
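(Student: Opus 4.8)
The plan is to reduce the claim to Corollary~\ref{cor:main}, specifically to its constant-shift form \eqref{item:cor:main:nu}, which already turns a contractive-type condition with a fixed shift into the Cauchy property, provided $d(x_n,x_{n+1})\to0$ and $\m$ satisfies \eqref{eqn:limsup m <= limsup d}. Write $x_n=T^nx$. First I would secure the two standing hypotheses of the corollary. Condition \eqref{eqn:limsup m <= limsup d} along subsequences of $\{T^nx\}$ is assumed in the statement. For $d(x_n,x_{n+1})\to0$, apply part \eqref{item:d(Tnx,Tny)->0} of Definition~\ref{dfn:abtahi-ACF} with the pair $(x,Tx)$: this gives $d(T^nx,T^n(Tx))\to0$, that is, $d(x_n,x_{n+1})\to0$.

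It then remains to verify the hypothesis of Corollary~\ref{cor:main}\eqref{item:cor:main:nu}. Fix $\e>0$ and let $\de>0$, $\nu\in\N$, and $N\in\N_0$ be the constants supplied by Definition~\ref{dfn:abtahi-ACF}(ii) for this $x$ and this $\e$; note that, $x$ being fixed, the integer $\nu$ depends only on $\e$, as the corollary requires. Given subsequences $\{x_{p_n}\}$ and $\{x_{q_n}\}$ with $\limsup\m(x_{p_n},x_{q_n})\leq\e$, I would argue that both premises of the implication in Definition~\ref{dfn:abtahi-ACF}(ii) eventually hold: since $\e<\e+\de$, there is $N_1$ with $\m(T^{p_n}x,T^{q_n}x)<\e+\de$ for all $n\geq N_1$, and since subsequence indices diverge there is $N_2$ with $p_n,q_n\geq N$ for all $n\geq N_2$. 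Hence, for $n\geq\max\{N_1,N_2\}$, the implication yields $d(x_{p_n+\nu},x_{q_n+\nu})\leq\e$, which is exactly condition \eqref{item:cor:main:nu} of Corollary~\ref{cor:main}. The corollary then gives that $\{T^nx\}$ is Cauchy.

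I do not expect a genuine obstacle: the technical work is carried by Section~\ref{sec:technical-lemma}, and the single delicate point is the routine translation from the $\de$--$\e$ form of Definition~\ref{dfn:abtahi-ACF}(ii) to the $\limsup$ form required by the corollary --- the same translation already performed in Lemma~\ref{lem:equiv-conditions-m-contractive-sequence}, now carrying a constant shift $\nu$ rather than a shift by $1$. The only things to watch are that the hypothesis $\limsup\m\leq\e$ lies strictly below the threshold $\e+\de$ and that the indices $p_n,q_n$ tend to infinity, so that the restriction $p,q\geq N$ built into the definition is automatically met for all large $n$.
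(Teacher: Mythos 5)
Your proposal is correct and follows essentially the same route as the paper: both reduce to Corollary~\ref{cor:main}\eqref{item:cor:main:nu} by deducing $d(x_n,x_{n+1})\to0$ from condition \eqref{item:d(Tnx,Tny)->0} of Definition~\ref{dfn:abtahi-ACF} and translating the $\e$--$\de$ form of the contraction condition into the $\limsup$ form with constant shift $\nu$. The only difference is cosmetic: the paper invokes this translation as an equivalence whose proof it omits (citing Lemma~\ref{lem:equiv-conditions-m-contractive-sequence}), whereas you write out explicitly the one direction actually needed.
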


\begin{proof}
  Let $x_n=T^nx$, $n\in\N_0$. Then the following are equivalent (the proof is similar to that of Lemma
  \ref{lem:equiv-conditions-m-contractive-sequence} and hence is omitted).
  \begin{enumerate}[\upshape(i)]
    \item for every $\e>0$, there exist $\nu\in\N$, $\de>0$ and $N\in\N_0$ such that
     \[
       \forall\, p,q \geq N,\quad
       \m(x_p,x_q) < \e+\de \Longrightarrow
       d(x_{p+\nu},x_{q+\nu}) \leq \e,
     \]

    \item for every $\e>0$, there exists $\nu\in\N$ such that,
    for any two subsequences $\{x_{p_n}\}$ and $\{x_{q_n}\}$,
    if\/ $\limsup \m(x_{p_n},x_{q_n})\leq \e$, then, for some $N$,
    \begin{equation*}
      d(x_{p_n+\nu},x_{q_n+\nu}) \leq \e, \qquad (n\geq N).
    \end{equation*}
  \end{enumerate}

  Condition \eqref{item:d(Tnx,Tny)->0} in Definition \ref{dfn:abtahi-ACF}
  implies that $d(x_n,x_{n+1})\to0$. Now part \eqref{item:cor:main:nu} of
  Corollary \ref{cor:main} shows that $\{x_n\}$ is Cauchy.
\end{proof}

At first look, because of replacing the distance function $d(x,y)$ with
a more general function $\m(x,y)$, such as the one defined by \eqref{eqn:m(x,y)-in-generalized-Proinov},
it may seem that Definition \ref{dfn:abtahi-ACF} and
Theorem \ref{thm:abtahi-AC-Cauchy} are extensions of Suzuki's Definition \ref{dfn:ACF} and
Theorem \ref{thm:Suzuki-AC-Cauchy}, respectively. However, we have the following result which
confirms that Definition \ref{dfn:ACF} is the final definition of asymptotic contractions.

\begin{thm}
\label{thm:final}
  Let $X$ be a complete metric space. If $T$ is an asymptotic $\m$-contraction,
  for some $\m$ satisfying \eqref{eqn:limsup m <= limsup d} for all $x$, then $T$ is
  an asymptotic contraction of the final type.
\end{thm}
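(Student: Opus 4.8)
The plan is to verify directly that $T$ satisfies the two conditions of Definition \ref{dfn:ACF}. The first condition, $d(T^nx,T^ny)\to0$ for all $x,y\in X$, is literally the first condition in Definition \ref{dfn:abtahi-ACF}, so it holds by hypothesis and requires nothing further. Thus the entire task reduces to checking the second condition of Definition \ref{dfn:ACF}: for every $x\in X$ and $\e>0$, there exist $\de>0$ and $\nu\in\N$ such that $\e<d(T^px,T^qx)<\e+\de$ implies $d(T^{p+\nu}x,T^{q+\nu}x)\leq\e$ for all indices $p,q$.

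The key observation I would exploit is the freedom to take the shift $\nu$ as large as we like (depending on $\e$ and $x$). First I would invoke Theorem \ref{thm:abtahi-AC-Cauchy}: since $T$ is an asymptotic $\m$-contraction and $\m$ satisfies \eqref{eqn:limsup m <= limsup d} for every $x$, the orbit $\set{T^nx}$ is Cauchy, and completeness of $X$ gives a point $z$ with $T^nx\to z$. Now, given $\e>0$, choose $\nu\in\N$ so large that $d(T^mx,z)<\e/2$ for all $m\geq\nu$. Then for \emph{every} pair $p,q$ we have $p+\nu\geq\nu$ and $q+\nu\geq\nu$, whence
\[
 d(T^{p+\nu}x,T^{q+\nu}x)\leq d(T^{p+\nu}x,z)+d(z,T^{q+\nu}x)<\tfrac\e2+\tfrac\e2=\e.
\]
Consequently the conclusion $d(T^{p+\nu}x,T^{q+\nu}x)\leq\e$ holds unconditionally, and the second condition of Definition \ref{dfn:ACF} is satisfied with this $\nu$ and any $\de>0$ (say $\de=1$), since the implication is vacuously valid once its conclusion always holds.

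What makes this proof short, and what I would emphasize, is that the contractive hypothesis enters \emph{only} through Theorem \ref{thm:abtahi-AC-Cauchy} to secure convergence of the orbit; the apparently delicate passages---bridging the general function $\m$ back to the metric $d$ in the premise, and upgrading the ``$p,q\geq N$'' form to a statement about all $p,q$---never have to be carried out, because Suzuki's formulation permits the shift $\nu$ to depend on $\e$ and grow without bound. The only point demanding a little care is confirming that a \emph{single} $\nu=\nu(\e,x)$ simultaneously drives both shifted orbit points $T^{p+\nu}x$ and $T^{q+\nu}x$ into the $(\e/2)$-ball about the limit $z$; this is immediate from $T^mx\to z$ together with the monotonicity of the shift in the index. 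I do not anticipate a genuine obstacle here, so the main content of the theorem is really the Cauchy criterion already established, and this result serves to confirm, as claimed, that Definition \ref{dfn:ACF} is the final notion.
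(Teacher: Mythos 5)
Your proof is correct, and it differs from the paper's at the final step. Both arguments begin identically: invoke Theorem \ref{thm:abtahi-AC-Cauchy} to get that each orbit $\set{T^nx}$ is Cauchy, then use completeness to produce a limit $z$. From there the paper observes (via condition (i) of Definition \ref{dfn:abtahi-ACF}) that all orbits share the \emph{same} limit $z$, and then cites Theorem 8 of Suzuki \cite{Suzuki-AC-2007} as a black box to conclude that $T$ is an asymptotic contraction of the final type. You instead verify Definition \ref{dfn:ACF} directly: condition (i) is inherited verbatim from Definition \ref{dfn:abtahi-ACF}, and condition (ii) holds vacuously once $\nu$ is chosen so large that $d(T^mx,z)<\e/2$ for all $m\geq\nu$, since then $d(T^{p+\nu}x,T^{q+\nu}x)<\e$ for \emph{all} $p,q$ by the triangle inequality, making the implication true regardless of its premise. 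In effect you have inlined the relevant special case of Suzuki's Theorem 8 rather than citing it. Your route buys self-containedness --- no external result is needed, and you never even use the fact that different orbits have a common limit, since condition (ii) is a per-orbit statement --- while the paper's citation buys brevity and ties the theorem to Suzuki's characterization machinery for final-type contractions. Both are sound; yours has the additional merit of exposing why the theorem is true at bottom: once every orbit converges, the shifted-iterate condition in Definition \ref{dfn:ACF} is satisfied trivially by pushing the shift $\nu$ far enough out.
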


\begin{proof}
  For every $x\in X$, by Theorem \ref{thm:abtahi-AC-Cauchy}, the sequence $\set{T^nx}$ is Cauchy,
  and, since $X$ is complete, there is $z\in X$ such that $T^nx\to z$.  Since $d(T^nx,T^ny)\to0$,
  for all $x,y\in X$, the point $z$ is unique. Now, Theorem 8 in \cite{Suzuki-AC-2007} shows
  that $T$ is an asymptotic contraction of the final type.
\end{proof}

\bigskip

\section*{Acknowledgment}

The author expresses his sincere gratitude to the anonymous referee for his/her careful reading
and suggestions that improved the presentation of this paper.

\bigskip

{\it Received: ; Accepted: }

\end{document}